\numberwithin{equation}{section}
\newtheorem{prop}[equation]{Proposition}
\newtheorem{lemma}[equation]{Lemma}
\newtheorem{theorem}[equation]{Theorem}
\newtheorem{corol}[equation]{Corollary}
\theoremstyle{remark}
\newtheorem{remark}[equation]{Remark}
\theoremstyle{definition}
\newtheorem{example}[equation]{Example}
\DeclareSymbolFont{AMSb}{U}{msb}{m}{n}
\DeclareMathSymbol{\N}{\mathbin}{AMSb}{"4E}
\DeclareMathSymbol{\Z}{\mathbin}{AMSb}{"5A}
\DeclareMathSymbol{\R}{\mathbin}{AMSb}{"52}
\DeclareMathSymbol{\Q}{\mathbin}{AMSb}{"51}
\DeclareMathSymbol{\I}{\mathbin}{AMSb}{"49}
\DeclareMathSymbol{\C}{\mathbin}{AMSb}{"43}
\newcommand{\dbl}{[\hspace{-0.2ex}[}
\newcommand{\dbr}{]\hspace{-0.2ex}]}
\newcommand{\db}[1]{\dbl {#1} \dbr}
\newcommand{\dblc}{(\hspace{-0.2ex}(}
\newcommand{\dbrc}{)\hspace{-0.2ex})}
\newcommand{\dbc}[1]{\dblc {#1} \dbrc}
\newcommand{\ctens}{\widehat{\otimes}}
\newcommand{\iso}{\cong}
\newcommand{\invlim}{\underleftarrow{\textnormal{lim}}\,}
\newcommand{\incl}{\hookrightarrow}
\newcommand{\id}{\textnormal{id}}
\newcommand{\Hom}{\textnormal{Hom}} 					
\newcommand{\dash}{\textnormal{-}}
\newcommand{\tn}[1]{\textnormal{#1}}
\newcommand{\cat}[1]{\tn{\textbf{#1}}} 					
\newcommand{\Cot}{\textnormal{Cot}} 					
\newcommand{\CGQ}{\textnormal{GQ}} 				
\newcommand{\wCGQ}{\widetilde{\textnormal{GQ}}} 
\newcommand{\wCK}{\widetilde{k}} 
\newcommand{\CK}{k} 
\newcommand{\Gr}{\textnormal{G}} 					
\newcommand{\Pri}[1]{\textnormal{P}_{#1}} 				
\newcommand{\qPri}[1]{\overbar{\textnormal{P}}_{#1}}
\newcommand{\overbar}[1]{\mkern 1.5mu\overline{\mkern-1.5mu#1\mkern-1.5mu}\mkern 1.5mu}
\providecommand{\keywords}[1]
{
  \small	
  \textbf{\textit{Keywords:}} #1
}
\title{A functorial approach to Gabriel $k$-quiver constructions for coalgebras and pseudocompact algebras}
\author[a]{Kostiantyn Iusenko}
\author[b]{John William MacQuarrie}
\author[a,b]{Samuel Quirino}
\affil[a]{Instituto de Matem\'{a}tica e Estat\'{i}stica, Univ. de São Paulo, São Paulo, SP, Brazil}
\affil[b]{Universidade Federal de Minas Gerais, Belo Horizonte, MG, Brazil}
\begin{document}

\footnotetext{\textit{Email addresses:} iusenko@ime.usp.br (Kostiantyn Iusenko), john@mat.ufmg.br (John MacQuarrie), quirino@ime.usp.br (Samuel Quirino)}

\maketitle

\begin{abstract}
We define the path coalgebra and Gabriel quiver constructions as functors between the category of $k$-quivers and the category of pointed $k$-coalgebras, for $k$ a field.  We define a congruence relation on the coalgebra side, show that the functors above respect this relation, and prove that the induced Gabriel $k$-quiver functor is left adjoint to the corresponding path coalgebra functor.  We dualize, obtaining adjoint pairs of functors (contravariant and covariant) for pseudocompact algebras. Using these tools we describe precisely to what extent presentations of coalgebras and algebras in terms of path objects are unique, giving an application to homogeneous algebras.
\end{abstract}

\keywords{Adjoint functors, path coalgebra, complete path algebra,
Gabriel $k$-quiver.}

\section{Introduction}

Let $k$ be a field.  A $(k$-)coalgebra is defined in the monoidal category of $k$-vector spaces by axioms dual to those of an associative, unital $k$-algebra.  Coalgebras have been studied extensively since their introduction for at least two reasons: Firstly, they form ``half the structure'' of Hopf algebras, whose applications range from group theory to physics (we refer to \cite{Abe,DNR,Montgomery} and references therein).  And secondly, due to the fact that coalgebras have very strong finiteness properties, making them a natural context in which to generalize concepts and results from finite dimensional algebras and their representations (e.g. \cite{Green,Simson4,Simson}, and the references therein).

While the formal properties of coalgebras are very pleasant to work with, explicit calculations can be unwieldy.  For this reason, a standard trick when working with a coalgebra $C$ is to pass to its vector space dual $C^*$, which inherits naturally the structure of a topological, associative, unital $k$-algebra.  The class of algebras dual to the class of coalgebras is precisely the class of pseudocompact algebras \cite{Brumer,Simson1}, and thus understanding pseudocompact algebras and their representations provides useful tools when working with coalgebras.  But pseudocompact algebras are of independent interest, appearing for example as completed group algebras of profinite groups, so that the understanding of their structure and representations has applications in Galois theory, finite group theory, algebraic geometry and more.

\medskip

The combinatorial approach to the representation theory of finite dimensional algebras begins with two fundamental constructions: given a (pointed) finite dimensional algebra $A$ one may construct a finite directed graph, referred to as the (Gabriel) quiver of $A$.  In the other direction, beginning with a finite quiver $Q$, one may construct an associative algebra, the (complete) path algebra of $Q$.  In \cite{IM}, the first two authors of this article describe these constructions as a pair of adjoint functors, utilizing a certain intermediate category of ``Vquivers'', in which the arrows of a quiver are replaced with vector spaces (in this article, we refer to such objects as ``$k$-quivers'' rather than ``Vquivers'', the former being a more common name in the literature, e.g. \cite[Section 7]{Gabriel} and \cite[Section 5.1]{Keller}).  The adjunction presented there thus gives a very precise explanation of what information one can obtain about a finite dimensional algebra in terms of the underlying combinatorial structure.  On the other hand, the adjunction has several limitations: firstly, the category of Vquivers presented in \cite{IM} is rather unnatural, in the sense that the morphisms are not intuitive.  Secondly, only finite quivers and (essentially) only finite dimensional algebras are considered.  Thirdly, in the category of algebras, only algebra homomorphisms that are surjective modulo the radical are permitted.  
  
The Gabriel quiver construction and path algebra construction have been dualized (e.g. \cite[Section 1]{Montgomery95}, \cite[Section 8]{Simson1}, \cite{Woodcock}) and can be applied to arbitrary (pointed) coalgebras.  In this article we consider the category $k\dash\cat{Quiv}$ of $k$-quivers, and show that we have a pair of functors ``Path coalgebra'', from $k\dash\cat{Quiv}$ to the category of pointed coalgebras $\cat{PCog}$, and ``Gabriel $k$-quiver'', from $\cat{PCog}$ to $k\dash\cat{Quiv}$.  We define a natural equivalence relation $\sim$ on the morphisms of $\cat{PCog}$, observe that the functors above can be interpreted as functors between $k\dash\cat{Quiv}$ and the quotient category $\cat{PCog}_{\sim}$ and show that, interpreted this way, the Gabriel $k$-quiver functor is left adjoint to the path coalgebra functor (Theorem \ref{Theorem Adjunction Level 1}).  This adjunction improves on the main result of \cite{IM} in every way: the category of $k$-quivers presented here is far more natural; there are no finiteness assumptions; there are no hypotheses applied to coalgebra homomorphisms. 
Using certain dualities, we also give two pairs of adjoint functors where on the algebraic side we have pseudocompact algebras:  Firstly, a pair of contravariant functors adjoint on the left between $k\dash\cat{Quiv}$ and a quotient $\cat{PAlg}_{\sim}$ of the category $\cat{PAlg}$ of pointed pseudocompact algebras.  Secondly, a pair of covariant adjoint functors, which can be treated as a direct generalization of \cite[Theorem 5.2]{IM}.

The paper is organized as follows.  In Section \ref{Section Prelims} we introduce the basic definitions and concepts we require from coalgebras and comodules.  In Section \ref{Section Categories and functors} we define the categories and functors of interest.  In Section \ref{Section Coalgebra adjunction} we prove the main result (Theorem \ref{Theorem Adjunction Level 1}) of the paper, an adjunction between the functors defined in Section \ref{Section Categories and functors}.  We also give some simple examples and immediate consequences, and a brief comparison with an adjunction due to Radford.  In Section \ref{Section PC algebras and adjunctions} we dualize the theory from the previous sections, obtaining versions for pseudocompact algebras of Theorem \ref{Theorem Adjunction Level 1}. In Section \ref{Sec.uniqueness of presentations} we use  the main result to explain to what extent the presentation of a (co)algebra in terms of its path (co)algebra is unique, and as application prove that if two quotients of a completed path algebra by homogeneous closed ideals $I,L$ are isomorphic and if $I$ has degree $n$, then $L$ also has degree $n$.

\medskip

\textbf{Acknowledgements.} We would like to thank William Chin and Mark Kleiner for helpful discussions concerning this research. We are also grateful to Eduardo N.\ Marcos who drew our attention to an application of our results (see Remark \ref{EduQ}). The first author was partially
supported by FAPESP grants 2014/09310-5 and 2018/23690-6. The second author was  partially supported by CNPq grant 443387/2014-1, FAPEMIG grant PPM-00481-16 and UFMG ADRC grant Edital 05/2016. The third author was partially supported by CAPES -- Finance Code 001.

\section{Preliminaries}\label{Section Prelims}
\subsection{Coalgebras}

Fix a field $k$. Algebras, coalgebras, vector spaces, linear maps and tensor products are over $k$ unless specified otherwise. 

For a general introduction to coalgebras and comodules, see, for instance, \cite{Abe,DNR,Montgomery,Sweedler}. Given a coalgebra $C$, denote its comultiplication by $\varDelta_C$ and its counity by $\varepsilon_C$. By $\cat{Cog}$ we denote the category of all coalgebras and coalgebra  homomorphisms. If $C$ and $D$ are coalgebras and $M$ is a $C$-$D$-bicomodule, write $\mu_M:M\to C\otimes M$ for the structure of the left $C$-comodule $M$ and $\nu_M:M\to M\otimes D$ for the structure of the right $D$-comodule $M$. To simplify notation, we drop the subscript whenever there is no chance of confusion and make use of the \emph{sigma notation} (or Sweedler notation) \cite[Sections 1.2 and 2.0]{Sweedler}. In analogy with  \cite[Corollary 2.61]{Rotman} we have that the category of $C$-$D$-bicomodules is equivalent to the category of right $C^{cop}\otimes D$-comodules (in which $C^{cop}$ is the coopposite coalgebra of $C$ defined in the usual way, e.g. \cite[Definition 1.1.5]{Montgomery}). Observe that if $\rho:M\to N$ is any homomorphism of $C$-$D$-bicomodules, then $\rho$ is also a homomorphism of right $C^{cop}\otimes D$-comodules, and vice-versa.

The \emph{coradical} $C_0$ of the coalgebra $C$ is the sum of the simple left (or right) subcomodules of $C$. Thus, $C$ is \emph{cosemisimple} if, and only if, $C=C_0$. 
 
Define inductively $C_n$ to be the largest subcomodule of $C$ with the property that $C_n/C_{n-1}$ is cosemisimple.  Each $C_n$ is in fact a subcoalgebra and can be calculated as follows:
\[C_n\coloneqq \varDelta^{-1}(C\otimes C_{n-1}+C_0\otimes C).\]
The family $\{C_n\}_{n\in \N}$ is the \emph{coradical filtration} of $C$, 
where $\N$ denotes the set of all natural numbers including zero.  We have that $C = \bigcup_{n\in\N}C_n$.  Throughout this text, any numbered subscript on a coalgebra refers to its coradical filtration.  For these facts and more about the coradical filtration, see for instance \cite[Section 5.2]{Montgomery}.  We occasionally use the helpful convention $C_{-1} := \{0\}$.  We need one more useful fact:

\begin{lemma}\label{Lemma injective mod Cn}
Let $\rho:C\to D$ be an injective coalgebra homomorphism.  For each $n\in \N$ the induced map $C/C_n \to D/D_n$ is injective.
\end{lemma}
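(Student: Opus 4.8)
\emph{The plan.} The whole statement reduces to a single internal fact about subcoalgebras, which I would then prove by an induction driven by one elementary observation about comodules. Since $\rho$ is injective it identifies $C$ with the subcoalgebra $\rho(C)\subseteq D$, and the coradical filtration is intrinsic to a coalgebra, so $\rho(C_n)=\rho(C)_n$. Hence everything follows from the claim: \emph{if $C$ is a subcoalgebra of $D$, then $C_n=C\cap D_n$ for all $n\in\N$.} Granting this for $\rho(C)\subseteq D$, we get $\rho(C_n)=\rho(C)_n=\rho(C)\cap D_n\subseteq D_n$, so $c+C_n\mapsto\rho(c)+D_n$ is well defined; and $\rho^{-1}(D_n)=\rho^{-1}\big(\rho(C)\cap D_n\big)=\rho^{-1}\big(\rho(C_n)\big)=C_n$ by injectivity, so this map is injective. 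From here on I assume $C$ is a subcoalgebra of $D$.

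\emph{The comodule lemma.} The key input is: for any left $C$-comodule $M$, viewed as a $D$-comodule through $C\incl D$, the $C$-subcomodules of $M$ coincide with the $D$-subcomodules of $M$. This one checks from the identity $(C\otimes M)\cap(D\otimes N)=C\otimes N$ (for a subspace $N\subseteq M$, intersection taken inside $D\otimes M$), which is immediate on choosing a basis of $D$ extending one of $C$. Two consequences: the socle of a $C$-comodule is the same whether computed over $C$ or over $D$; and, since $C$ is a subcoalgebra, the inclusion $\iota\colon C\incl D$ is a morphism of left $D$-comodules (for the $D$-comodule structure $C$ inherits from $D$), so $C$, and hence each $C_m$, is a $D$-subcomodule of $D$.

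\emph{The induction.} By the definition of the filtration, $C_m/C_{m-1}=\operatorname{soc}(C/C_{m-1})$ and likewise for $D$ (convention $C_{-1}=D_{-1}=\{0\}$). Assume inductively $C_{n-1}=C\cap D_{n-1}$ (vacuous when $n=0$). Then $\iota$ induces a morphism of $D$-comodules $\varphi\colon C/C_{n-1}\to D/D_{n-1}$ with $\ker\varphi=(C\cap D_{n-1})/C_{n-1}=0$, hence injective. Since for an injective comodule map the socle of the source is the preimage of the socle of the target, $\operatorname{soc}(C/C_{n-1})=\varphi^{-1}(\operatorname{soc}(D/D_{n-1}))=\varphi^{-1}(D_n/D_{n-1})=(C\cap D_n)/C_{n-1}$; while $\operatorname{soc}(C/C_{n-1})=C_n/C_{n-1}$ by the definition of the filtration of $C$ together with the socle-agreement above. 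Comparing these (both are subspaces of $C/C_{n-1}$ whose preimages in $C$ contain $C_{n-1}$) gives $C_n=C\cap D_n$, closing the induction.

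\emph{Main obstacle.} The argument is essentially formal once the comodule lemma is in place; the only point genuinely needing care is the passage between $C$- and $D$-comodule structures (so that ``socle'' is unambiguous) and the bookkeeping of which object is a subcomodule of which. I do not expect a real difficulty beyond this. An alternative route uses the wedge description $C_n=C_0\wedge\cdots\wedge C_0$ ($(n+1)$ factors), but then one must keep track of the ambient coalgebra in which the wedge is formed — exactly the subtlety the comodule formulation avoids — so I would prefer the comodule approach.
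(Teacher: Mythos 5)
Your proof is correct and takes essentially the same route as the paper: both reduce the lemma to the identity $\rho(C_n)=\rho(C)\cap D_n$ for the subcoalgebra $\rho(C)\subseteq D$, which the paper simply cites (from Heyneman--Radford) and which you instead establish from scratch via a correct socle-filtration induction. The only difference is that you supply a self-contained proof of the cited fact; there is no gap.
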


\begin{proof}
The image $\rho(C)$ is a subcoalgebra of $D$ isomorphic to $C$.  By \cite[Corollary 2.3.7]{HR74} 
$$\rho(C_n) = \rho(C)_n = \rho(C)\cap D_n.$$
It follows that if $\rho(x)\in D_n$, then $x$ is in $C_n$, as required.
\end{proof}

Given a coalgebra $C$ and $C$-bicomodule $M$, denote by $\Cot_C(M)$ the \emph{cotensor coalgebra}  $$\Cot_C(M)\coloneqq\bigoplus_{i=0}^{\infty}M^{\Box_i},$$ 
with $M^{\Box_0}\coloneqq C$ and $M^{\Box_n}\coloneqq (M^{\Box_{n-1}})\Box_C M$, wherein $\Box_C$ denotes the cotensor product over $C$ (see, \cite[Section 1.4]{Nichols} for details). 

The cotensor coalgebra is given by a universal property, which we present below.  Note that a coalgebra homomorphism $\rho:D\to C$ makes $D$ into a $C$-bicomodule with structure maps $\mu=(\rho\otimes \id)\varDelta_D$ and $\nu=(\id\otimes \rho)\varDelta_D$;  
the canonical projection  $\pi_0:\Cot_C(M)\to C$ is a coalgebra homomorphism;  
the canonical projection $\pi_1:\Cot_C(M)\to M$ is a $C$-bicomodule homomorphism.

\begin{prop}[Universal Property of the Cotensor Coalgebra, {\cite[Proposition 1.4.2]{Nichols}}]\label{cUniversalProperty}
Let $C$ and $D$ be coalgebras and $M$ a $C$-bicomodule. Given a coalgebra homomorphism $\rho_0:D\to C$, and a $C$-bicomodule homomorphism $\rho_1:D\to M$ with the property that $\rho_1$ vanishes on $D_0$, then there exists a unique coalgebra homomorphism $\rho:D\to \Cot_{C}(M)$ making the following diagrams commute
\[\begin{tikzcd}
 & \Cot_C(M)\arrow[d,"\pi_0"] \\
D \arrow[ur, dashed, "\rho"]\arrow[r, "\rho_0" near end] & C
\end{tikzcd}
\quad
\begin{tikzcd}
& \Cot_C(M)\arrow[d,"\pi_1"] \\
D \arrow[ur, dashed, "\rho"]\arrow[r, "\rho_1" near end] & M
\end{tikzcd}\]
\end{prop}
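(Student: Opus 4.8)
The plan is to reconstruct $\rho$ from its components $\rho_n\coloneqq\pi_n\circ\rho\colon D\to M^{\Box_n}$. The maps $\rho_0$ and $\rho_1$ are prescribed by the hypotheses, and for $n\ge 2$ I would set
\[
\rho_n\coloneqq(\rho_1\otimes\rho_{n-1})\circ\varDelta_D\colon D\longrightarrow M\otimes M^{\Box_{n-1}},
\]
and then assemble $\rho\coloneqq\sum_{n\ge 0}\rho_n$. Three things then need checking: that the recursion is well posed (its image lands in $M^{\Box_n}=M\,\Box_C\,M^{\Box_{n-1}}$), that $\rho$ is actually a map into the direct sum $\Cot_C(M)$, and that $\rho$ is a coalgebra homomorphism; uniqueness will fall out of the same computation.

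First I would prove by induction on $n$ that each $\rho_n$ is a homomorphism of $C$-bicomodules, where $D$ carries the structure $(\rho_0\otimes\id)\varDelta_D$, $(\id\otimes\rho_0)\varDelta_D$ induced by $\rho_0$. The cases $n=0,1$ hold by hypothesis. For the inductive step, the identities $\nu_M\rho_1=(\rho_1\otimes\rho_0)\varDelta_D$ and $\mu_{M^{\Box_{n-1}}}\rho_{n-1}=(\rho_0\otimes\rho_{n-1})\varDelta_D$, together with coassociativity, show simultaneously that $(\rho_1\otimes\rho_{n-1})\varDelta_D$ has image inside $M\,\Box_C\,M^{\Box_{n-1}}=M^{\Box_n}$ (so the recursion makes sense) and that the resulting $\rho_n$ is again a bicomodule map. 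I would then show $\rho_n$ vanishes on $D_{n-1}$, again by induction: $\rho_1$ kills $D_0$ by hypothesis, and for $d\in D_{n-1}$ one invokes the standard property $\varDelta_D(D_{n-1})\subseteq\sum_{i+j=n-1}D_i\otimes D_j$ of the coradical filtration, so that in each summand of $(\rho_1\otimes\rho_{n-1})\varDelta_D(d)$ the left tensorand lies in some $D_i$ with $i\ge 1$ (otherwise $\rho_1$ annihilates it), forcing $j\le n-2$ and hence $\rho_{n-1}$ to annihilate the right tensorand. Since $D=\bigcup_m D_m$, only finitely many $\rho_n(d)$ are nonzero for each fixed $d$, so $\rho$ is a well-defined linear map $D\to\Cot_C(M)$, and $\pi_0\rho=\rho_0$, $\pi_1\rho=\rho_1$ by construction.

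It remains to check $\rho$ is a coalgebra homomorphism. Compatibility with counits is immediate, the counit of $\Cot_C(M)$ being $\varepsilon_C\pi_0$. For the comultiplication I would work componentwise, using that by construction the comultiplication of $\Cot_C(M)$ restricts on $M^{\Box_n}$ to the ``deconcatenation'' map whose $M^{\Box_i}\otimes M^{\Box_j}$-component (for $i+j=n$) is the canonical inclusion $M^{\Box_n}=M^{\Box_i}\,\Box_C\,M^{\Box_j}\incl M^{\Box_i}\otimes M^{\Box_j}$ when $i,j\ge 1$, and is the left, resp.\ right, $C$-coaction when $i=0$, resp.\ $j=0$. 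Matching the $M^{\Box_i}\otimes M^{\Box_j}$-component of $(\rho\otimes\rho)\varDelta_D$ against this then reduces, for $i,j\ge 1$, to the identity $\rho_n=(\rho_i\otimes\rho_j)\varDelta_D$ (obtained by iterating the defining recursion and applying coassociativity) and, for $i=0$ or $j=0$, to the bicomodule property of $\rho_n$ established above. Finally, for uniqueness, if $\sigma\colon D\to\Cot_C(M)$ is another coalgebra homomorphism with $\pi_0\sigma=\rho_0$ and $\pi_1\sigma=\rho_1$, then comparing the $M\otimes M^{\Box_{n-1}}$-component of the comultiplication of $\Cot_C(M)$ applied to $\sigma(d)$ with that of $(\sigma\otimes\sigma)\varDelta_D(d)$ gives $\pi_n\sigma=(\rho_1\otimes(\pi_{n-1}\sigma))\varDelta_D$; as $M^{\Box_n}=M\,\Box_C\,M^{\Box_{n-1}}\subseteq M\otimes M^{\Box_{n-1}}$, this determines $\pi_n\sigma$ from $\pi_{n-1}\sigma$, and induction yields $\sigma=\rho$.

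I expect the main obstacle to be the bookkeeping in the last paragraph: pinning down the comultiplication of $\Cot_C(M)$ precisely in terms of the cotensor-product inclusions $M^{\Box_n}=M^{\Box_i}\,\Box_C\,M^{\Box_j}$, which rely on associativity of $\Box_C$, and in particular correctly handling the degree-zero corners, where the relevant map is a $C$-coaction rather than an honest inclusion. No single step is deep, but keeping these identifications consistent --- along with the finiteness argument via the coradical filtration, which is precisely where the hypothesis $\rho_1|_{D_0}=0$ enters --- is what makes the verification delicate.
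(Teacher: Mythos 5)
Your proof is correct, and it is essentially the argument of \cite[Proposition 1.4.2]{Nichols}: the paper itself offers no proof of this proposition, citing Nichols directly, so there is nothing in the source to diverge from. All the key points are in place --- the recursion $\rho_n=(\rho_1\otimes\rho_{n-1})\varDelta_D$ landing in $M\,\Box_C\,M^{\Box_{n-1}}$ via the bicomodule identities, the vanishing of $\rho_n$ on $D_{n-1}$ (which is exactly where $\rho_1|_{D_0}=0$ and $\varDelta_D(D_{n-1})\subseteq\sum_{i+j=n-1}D_i\otimes D_j$ are used to force the image into the direct sum), and the componentwise verification of comultiplicativity and uniqueness. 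The one identification you flag as delicate, $M^{\Box_i}\,\Box_C\,M^{\Box_j}=M^{\Box_{i+j}}$, is unproblematic here: over a field every vector space is flat, so both iterated cotensor products coincide with the simultaneous equalizer inside $M^{\otimes i}\otimes M^{\otimes j}$, and the degree-zero corners are handled exactly as you say, by the bicomodule-map property of $\rho_n$.
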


\begin{remark} 
If $C=\bigoplus_{i=0}^{\infty}C_{(i)}$ is a \emph{graded coalgebra} such that $C_n=\bigoplus_{i=0}^{n} C_{(i)}$ for every $n$, then $C$ is \emph{coradically graded} (see \cite[Chapter 2.4.1]{Abe} and \cite[Lemma 2.2]{CM}). For instance, if $C$ is a cosemisimple coalgebra and $M$ a $C$-bicomodule, then $\Cot_C(M)=\bigoplus_{i=0}^{\infty}M^{\Box_i}$ is coradically graded \cite[Lemma 4.4]{Woodcock}.
\end{remark}

\subsection{Pointed coalgebras}

A coalgebra is \emph{pointed} if every simple subcoalgebra is one dimensional.  Denote by $\cat{PCog}$ the full subcategory of $\cat{Cog}$ having objects pointed coalgebras. Denote by $\Gr(C)$ the set $\{ g\in C\,|\, \varDelta(g)=g\otimes g,\, \varepsilon(g)=1\}$ of \emph{group-like elements} of the coalgebra $C$. The elements of $\Gr(C)$ are linearly independent in $C$ \cite[Proposition 3.2.1]{Sweedler}. For any set $S$, the \emph{group-like coalgebra} on $S$, $kS$, is the vector space with basis $S$ and maps $\varDelta(s)=s\otimes s$, $\varepsilon(s)=1$, extended linearly for all $s\in S$. In particular, $k\Gr(C)$ is the \emph{group-like subcoalgebra} of $C$.

A one-dimensional subcoalgebra $D\subseteq C$ is necessarily of the form $k\{g\}$, for some $g\in \Gr(C)$ (\cite[Lemma 8.0.1]{Sweedler}). Consequently, 
\begin{remark}\label{pointedkgc}
A coalgebra $C$ is pointed if and only if $C_0=kG(C)$.
\end{remark}

Given $g,h\in \Gr(C)$, denote by $\Pri{g,h}(C)\coloneqq\{ p\in C\,|\, \varDelta(p)=p\otimes g+h\otimes p\}$ the set of all \emph{$g,h$-primitive elements}. Note that the linear maps $\mu(p)=h\otimes p$ and $\nu(p)=p\otimes g$ make $\Pri{g,h}(C)$ a $k\Gr(C)$-bicomodule.  Note also that coalgebra homomorphisms respect group-like and primitive elements.

The next results describe some structure of pointed coalgebras based on their coradical filtrations. 
\begin{prop}[{\cite[Theorem 5.4.1]{Montgomery}}]\label{pointeddescription}
Let $C$ be a pointed coalgebra. Then
\begin{enumerate}[(i)]
\item the vector space $C_1$ has a decomposition
\[C_1=k\Gr(C)\oplus\Big(\!\!\!\bigoplus_{g,h\in \Gr(C)}\!\!\!\Pri{g,h}'(C)\Big),\] 
where $\Pri{g,h}'(C)$ is any vector space complement of the vector space $\langle h-g \rangle$ in $\Pri{g,h}(C)$, i.e. $\Pri{g,h}(C)=\langle h-g \rangle\oplus \Pri{g,h}'(C)$;
\item for any $n\geqslant 1$ and $c\in C_n$,
\[c=\!\sum_{g,h\in \Gr(C)}\!\!\!c_{g,h},\text{ where } \varDelta(c_{g,h})=c_{g,h}\otimes g+h\otimes c_{g,h}+\omega_{g,h}\]
for some $\omega_{g,h}\in C_{n-1}\otimes C_{n-1}$.
\end{enumerate}
\end{prop}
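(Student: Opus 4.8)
\medskip
\noindent
This is the Taft--Wilson theorem, and my plan is to prove (i) first, reading off from its proof the mechanism that then yields (ii) by a short induction. The organising observation is that, for each $n\geqslant 1$, the subquotient $C_n/C_{n-1}$ is a $C$-bicomodule (the quotient of the subcoalgebra $C_n$ by the subcoalgebra $C_{n-1}$) which is cosemisimple, by the defining property of the coradical filtration. A cosemisimple $C$-bicomodule is a $C_0$-bicomodule, and since $C$ is pointed $C_0=k\Gr(C)$, so this is precisely a $\Gr(C)\times\Gr(C)$-graded vector space; I would write
\[
C_n/C_{n-1}=\bigoplus_{g,h\in \Gr(C)} M_n^{(h,g)},
\]
with $M_n^{(h,g)}$ the summand on which the right $C_0$-coaction sends $m\mapsto m\otimes g$ and the left one sends $m\mapsto h\otimes m$. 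The point to extract is that if $\bar c\in M_n^{(h,g)}$ and $c\in C_n$ is a lift of it along $\pi\colon C_n\onto C_n/C_{n-1}$, then applying $\pi\otimes\id$ and $\id\otimes\pi$ to $\varDelta(c)\in C_n\otimes C_n$ gives
\[
\varDelta(c)-c\otimes g\in C_{n-1}\otimes C_n\qquad\text{and}\qquad \varDelta(c)-h\otimes c\in C_n\otimes C_{n-1}.
\]

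For (i) I would first note $\Pri{g,h}(C)\subseteq C_1$ (since $\varDelta(p)=p\otimes g+h\otimes p$ lies in $C\otimes C_0+C_0\otimes C$), and then, for a lift $c\in C_1$ of some $\bar c\in M_1^{(h,g)}$, combine the two inclusions above (with $n=1$, so $C_{n-1}=C_0$) to get
\[
\tau:=\varDelta(c)-c\otimes g-h\otimes c\in (C_0\otimes C_1)\cap(C_1\otimes C_0)=C_0\otimes C_0.
\]
The heart of the matter is to show that $\tau$ is a \emph{coboundary}: $\tau=\varDelta(c_0)-c_0\otimes g-h\otimes c_0$ for some $c_0\in C_0$, so that $c-c_0\in\Pri{g,h}(C)$ is a genuine primitive lift of $\bar c$. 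Granting that, a one-line check shows $\Pri{g,h}(C)\cap C_0=\langle h-g\rangle$ (a group-like combination that is $g,h$-primitive must be a scalar multiple of $h-g$), hence the composite $\Pri{g,h}'(C)\hookrightarrow C_1\onto C_1/C_0\onto M_1^{(h,g)}$ is injective (its kernel lies in $\Pri{g,h}'(C)\cap\langle h-g\rangle=0$) and surjective (by the coboundary step), so an isomorphism. Summing over all pairs $(g,h)$ gives $\bigoplus_{g,h}\Pri{g,h}'(C)\cong C_1/C_0$ with independent summands, whence $C_1=C_0\oplus\bigoplus_{g,h}\Pri{g,h}'(C)=k\Gr(C)\oplus\bigoplus_{g,h}\Pri{g,h}'(C)$, which is (i).

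For (ii) I would induct on $n\geqslant 1$. The base case $n=1$ is the grading statement from the proof of (i): a leftover term in $C_0=k\Gr(C)$ is absorbed into the diagonal pieces $c_{g,g}$, which alters the respective $\omega_{g,g}$ only by an element of $C_0\otimes C_0$. For $n\geqslant 2$ and $c\in C_n$, decompose $\bar c$ along $\bigoplus_{g,h}M_n^{(h,g)}$, choose lifts $c_{g,h}\in C_n$ of the homogeneous pieces, and put $r:=c-\sum_{g,h}c_{g,h}\in C_{n-1}$. Since $c_{g,h}\otimes g\in C_n\otimes C_0\subseteq C_n\otimes C_{n-1}$ and $h\otimes c_{g,h}\in C_0\otimes C_n\subseteq C_{n-1}\otimes C_n$, subtracting from the two displayed inclusions gives
\[
\omega_{g,h}:=\varDelta(c_{g,h})-c_{g,h}\otimes g-h\otimes c_{g,h}\in (C_{n-1}\otimes C_n)\cap(C_n\otimes C_{n-1})=C_{n-1}\otimes C_{n-1},
\]
exactly the congruence wanted for the pieces $c_{g,h}$. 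Applying the inductive hypothesis to $r$ yields $r=\sum_{g,h}r_{g,h}$ with the corresponding errors in $C_{n-2}\otimes C_{n-2}$; then $c=\sum_{g,h}(c_{g,h}+r_{g,h})$ works, as $C_{n-1}\otimes C_{n-1}+C_{n-2}\otimes C_{n-2}=C_{n-1}\otimes C_{n-1}$.

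The hard part is the coboundary step in (i); everything else is bookkeeping. I would attack it by a finite linear-algebra computation: writing $\tau=\sum_{i,j}\tau_{ij}\,g_i\otimes g_j$ in the group-like basis of $C_0$, the counit identities $(\varepsilon\otimes\id)\varDelta(c)=c=(\id\otimes\varepsilon)\varDelta(c)$ and coassociativity of $\varDelta$, applied to $\varDelta(c)=c\otimes g+h\otimes c+\tau$, should pin the scalars $\tau_{ij}$ down precisely to the image of the linear map $c_0\mapsto\varDelta(c_0)-c_0\otimes g-h\otimes c_0$ from $C_0$ to $C_0\otimes C_0$. A structural alternative to keep in mind is that $kc+C_0$ is a finite-dimensional subcoalgebra $D$ with $D_0=C_0$ and $D_1=D$, so $D^*$ is a finite-dimensional split algebra with square-zero radical; Wedderburn--Malcev then splits $D^*$ and the primitive decomposition of $D$ is immediate. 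Finally I would emphasise that (ii) needs no analogous correction: there the congruence for the $c_{g,h}$ is forced purely by $C_n$ being a subcoalgebra together with the $\Gr(C)\times\Gr(C)$-grading of $C_n/C_{n-1}$.
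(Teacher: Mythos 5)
The paper does not prove this proposition: it is the Taft--Wilson theorem, imported verbatim from \cite[Theorem 5.4.1]{Montgomery}, so there is no in-paper argument to compare against; your outline is essentially the standard textbook proof of that result, and it is correct. The reductions are all sound: for a lift $c$ of $\bar c\in M_1^{(h,g)}$ the element $\tau=\varDelta(c)-c\otimes g-h\otimes c$ does land in $(C_0\otimes C_1)\cap(C_1\otimes C_0)=C_0\otimes C_0$; the identification $\Pri{g,h}(C)\cap C_0=\langle h-g\rangle$ is a two-line check on group-like coefficients; and the induction for (ii) needs nothing beyond the two displayed containments together with $(C_{n-1}\otimes C_n)\cap(C_n\otimes C_{n-1})=C_{n-1}\otimes C_{n-1}$. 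The one step you leave as a plan rather than a proof --- that the $2$-cocycle $\tau$ is a coboundary --- does close exactly along the lines you indicate: writing $\tau=\sum_{a,b}\tau_{ab}\,g_a\otimes g_b$ over the finitely many group-likes involved, comparison of coefficients of $g_a\otimes g_b\otimes g_c$ in $(\varDelta\otimes\id)\varDelta(c)=(\id\otimes\varDelta)\varDelta(c)$ forces $\tau_{ab}=0$ whenever $a\neq h$, $b\neq g$ and $a\neq b$, together with $\tau_{ag}=-\tau_{aa}$ and $\tau_{hb}=-\tau_{bb}$, so that $c_0=\sum_{a\notin\{g,h\}}\tau_{aa}g_a+\lambda g$ (with $\lambda$ chosen to absorb $\tau_{hg}$) satisfies $\varDelta(c_0)-c_0\otimes g-h\otimes c_0=\tau$; the residual one-parameter freedom in $\lambda$ is exactly the ambiguity $\langle h-g\rangle$, consistent with part (i). Two small points to repair: in your structural alternative, $kc+C_0$ is finite dimensional only when $\Gr(C)$ is finite, so you should first pass to the finite-dimensional subcoalgebra generated by $c$ (fundamental theorem of coalgebras), whose coradical involves only the finitely many group-likes occurring in $\varDelta(c)$, before invoking Wedderburn--Malcev; and in (i) you should state explicitly that each $\Pri{g,h}'(C)$ maps \emph{into} the component $M_1^{(h,g)}$ of $C_1/C_0$ (immediate from $\varDelta(p)=p\otimes g+h\otimes p$), since that is what makes the sum $C_0+\sum_{g,h}\Pri{g,h}'(C)$ direct rather than merely each summand individually embedding.
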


Thus $\langle h-g \rangle$ is a subbicomodule of $\Pri{g,h}(C)$. Let $\qPri{g,h}(C)$ be the quotient bicomodule $\Pri{g,h}(C)/\langle h-g \rangle$ and write its elements as $\overline{p}=p+\langle h-g \rangle$.

For a pointed coalgebra $C$, Proposition \ref{pointeddescription} implies that $C_1=C_0+\sum_{g,h\in \Gr(C)}\Pri{g,h}(C)$. Hence, the structure maps of each $C_0$-bicomodule $\Pri{g,h}(C)$ induce a pair of structure maps making $C_1$ a $C_0$-bicomodule. Moreover,
\[\faktor{C_1}{C_0}=\!\sum_{g,h\in\Gr(C)}\!\!\!\frac{\Pri{g,h}(C)+C_0}{C_0}\iso\!\bigoplus_{g,h\in\Gr(C)}\!\!\!\qPri{g,h}(C).\]

\begin{prop}\label{Coradical filtration and homomorphisms}
Let $C$ and $D$ be coalgebras with $C$ pointed and  $\rho:C\to D$ a coalgebra map. Then $\rho(C_n)\subseteq D_n$ for all $n\in \mathbb N$.
\end{prop}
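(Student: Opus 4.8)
The plan is to induct on $n$; the hypothesis that $C$ is pointed is needed only for the base case $n=0$, which is where the entire content of the statement sits, while the inductive step is a purely formal manipulation using that $\rho$ intertwines comultiplications.

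\textbf{Base case.} Since $C$ is pointed, Remark \ref{pointedkgc} gives $C_0 = k\Gr(C)$. A coalgebra homomorphism sends group-like elements to group-like elements, so $\rho(C_0) = \rho\big(k\Gr(C)\big) \subseteq k\Gr(D)$. Each $g \in \Gr(D)$ spans the one-dimensional --- hence simple --- subcoalgebra $k\{g\}$, which is a simple subcomodule of $D$ and therefore lies in $D_0$; thus $\rho(C_0) \subseteq D_0$. This step genuinely uses pointedness: coradicals are not respected by arbitrary coalgebra maps, so this is the only place the hypothesis is essential.

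\textbf{Inductive step.} Assuming $\rho(C_{n-1}) \subseteq D_{n-1}$, take $c \in C_n$. By the defining formula $C_n = \varDelta_C^{-1}\big(C \otimes C_{n-1} + C_0 \otimes C\big)$ we have $\varDelta_C(c) \in C \otimes C_{n-1} + C_0 \otimes C$. Since $\rho$ is a coalgebra homomorphism, $\varDelta_D \circ \rho = (\rho \otimes \rho) \circ \varDelta_C$, whence
\[
\varDelta_D(\rho(c)) = (\rho \otimes \rho)\big(\varDelta_C(c)\big) \in \rho(C) \otimes \rho(C_{n-1}) + \rho(C_0) \otimes \rho(C) \subseteq D \otimes D_{n-1} + D_0 \otimes D,
\]
the last inclusion using the inductive hypothesis on the first summand and the base case on the second. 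Therefore $\rho(c) \in \varDelta_D^{-1}\big(D \otimes D_{n-1} + D_0 \otimes D\big) = D_n$, which closes the induction.

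The only points needing a moment's care are the routine facts that the image under a linear map of a sum of subspaces is the sum of the images, and that $\rho(U) \otimes \rho(V) \subseteq D \otimes D_{n-1}$ whenever $\rho(V) \subseteq D_{n-1}$ --- legitimate since $k$ is a field, so all the tensor products of subspaces in play embed into $D \otimes D$. I do not anticipate any real obstacle; the weight of the statement is carried entirely by the base case, where pointedness is indispensable. (An alternative to the inductive step, should one prefer to avoid the preimage formula, is to apply $\rho$ to the decomposition in Proposition \ref{pointeddescription}(ii), but the argument above is cleaner.)
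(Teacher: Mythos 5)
Your proof is correct. The paper itself disposes of this proposition in one line by citing Sweedler's Theorem 9.1.4, which states that a coalgebra map $\rho:C\to D$ satisfying $\rho(C_0)\subseteq D_0$ automatically satisfies $\rho(C_n)\subseteq D_n$ for all $n$; the pointedness of $C$ is used only to verify the hypothesis of that theorem, exactly as in your base case (group-likes go to group-likes, and each $k\{g\}$ is a simple subcomodule of $D$, hence lies in $D_0$). Your inductive step, using $C_n=\varDelta^{-1}(C\otimes C_{n-1}+C_0\otimes C)$ together with $\varDelta_D\circ\rho=(\rho\otimes\rho)\circ\varDelta_C$, is in effect a self-contained proof of the cited theorem specialized to this situation. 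What your version buys is transparency --- the reader sees precisely where pointedness enters and that the rest is formal --- at the cost of a few lines; what the citation buys is brevity and the slightly more general statement (any coalgebra map with $\rho(C_0)\subseteq D_0$, pointed or not). You have also correctly isolated the one point where care is needed, namely that $(\rho\otimes\rho)$ carries $C\otimes C_{n-1}+C_0\otimes C$ into $D\otimes D_{n-1}+D_0\otimes D$, which is unproblematic over a field.
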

\begin{proof}
Follows from \cite[Theorem 9.1.4]{Sweedler}.
\end{proof}

\subsection{Quivers and path coalgebras}

A \emph{quiver} $Q=(Q_0,Q_1,s,t)$ is a \emph{directed graph}, i.e.\ a set of vertices $Q_0$, a set of arrows $Q_1$, and two functions $s,t:Q_1\rightrightarrows Q_0$, where for any arrow $\alpha\in Q_1$, $s(\alpha)$ represents its source and $t(\alpha)$ represents its target \cite[Section III.1]{ARS}. 
A \emph{map of quivers} $\phi:Q\to R$ consists of a function $\phi_0:Q_0\to R_0$ together with a function $\phi_1:Q_1\to R_1$ such that $\phi(s(a)) = s(\phi(a))$ and  $\phi(t(a)) = t(\phi(a))$ for every $a\in Q_1$. 
Denote by $\cat{Quiv}$ the category of quivers and maps of quivers.

A \emph{path} in $Q$ of \emph{length} $l\geqslant 1$ is the formal composition of arrows $a_la_{l-1}\dots a_1$ with $s(a_j)=t(a_{j-1})$.  To each vertex $i\in Q_0$ we associate a \emph{stationary path} $e_i$ of length $|e_i|=0$ with $s(e_i)=t(e_i)=i$.

The \emph{path coalgebra} $\CK Q$ of the quiver $Q$ is the vector space with basis all paths in $Q$, with comultiplication and counity maps given by
\begin{align*}
\varDelta(w)=&\sum_{w=w_2w_1}\!\!\!w_2\otimes w_1, \qquad \varepsilon(w)=\delta_{|w|0}.
\end{align*}
In this way $\CK Q\cong \Cot_{kQ_0}(\tn{span}\{Q_1\})$ \cite[Section 4]{Woodcock}. Hence, $\CK Q$ is pointed, $\Gr(\CK Q)$ consists of the stationary paths, $(\CK Q)_0=kQ_0$, and $\CK Q$ is coradically graded with coradical filtration $\{(\CK Q)_{\leqslant m}\}_{m\in \N}$, where $(\CK Q)_{\leqslant m}$ is generated as a vector space by all paths of $Q$ of length $m$ or less. 

Given a pointed coalgebra $C$, one constructs the Gabriel quiver 
of $C$ as follows: the set of vertices is the set $\Gr(C)$ and the set of arrows from $g$ to $h$ is a basis of the quotient space $\qPri{g,h}(C)$ \cite[Description 4.12]{Simson}. 
The choice of these bases means that the construction is not functorial.

\section{Categories and functors}\label{Section Categories and functors}
\subsection{Category of $k$-quivers}\label{subsection VQuiv def}

A \emph{$k$-quiver} $VQ = (VQ_0, VQ_{g,h})$ consists of a set of vertices $VQ_0$ together with a $k$-vector space $VQ_{g,h}$ for each (ordered) pair $g,h\in VQ_0$. A \emph{map of $k$-quivers} $\varphi=(\varphi_0,\varphi_{g,h}):(VQ_0,VQ_{g,h}) \to (VR_0,VR_{g',h'})$ 
consists of 
\begin{itemize}
\item a function $\varphi_0:VQ_0\to VR_0$.
\item a linear map $\varphi_{g,h}:VQ_{g,h}\to VR_{\varphi_0(g),\varphi_0(h)}$ for each pair of vertices $g,h\in VQ_0$.
\end{itemize}
The category $k\dash\cat{Quiv}$ has objects $k$-quivers and morphisms maps of $k$-quivers.  One might compare this definition with the more awkward \cite[Definitions 3.1 and 3.2]{IM}.

There exists a correspondence between quivers and $k$-quivers: given a quiver  $Q=(Q_0,Q_1)$, for each pair of vertices $g,h\in Q_0$, the vector spaces $Q_{g,h} \coloneqq \langle a\in Q_1\,|\,s(a)=g,t(a)=h \rangle $ define a $k$-quiver $VQ=(Q_0,Q_{g,h})$; on the other hand, if we start with a $k$-quiver $VQ=(VQ_0,VQ_{g,h})$, we obtain a quiver by taking as arrows from $g$ to $h$ a basis of $VQ_{g,h}$. The first correspondence (with the obvious assignment for morphisms) defines a functor, which we denote by $V(-):\cat{Quiv}\to k\dash\cat{Quiv}$. The second correspondence does not.  We observe in passing that the functor $V(-)$ of course does possess a forgetful right adjoint, but we make no use of this functor here.

\begin{example}

\[Q: 
\begin{tikzcd}
 1 \arrow[rr,bend left=15,"\alpha"] \arrow[rr,bend right=15,"\beta"'] && 2
 \arrow[out=30,in=-30,loop,"\gamma"] &, & 
 VQ=V(Q):\ \ 1 \arrow[rr,"{\langle\alpha,\beta\rangle}"]  && 2
 \arrow[out=30,in=-30,loop,"{\langle\gamma\rangle}"]
\end{tikzcd}
\]
In this example, the $k$-quiver $VQ$ has vertices $1,2$ and arrow spaces given by
\[VQ_{i,j}= 
\begin{cases}
\langle \alpha,\beta \rangle \iso k^2 & \hbox{if } i=1,\,j=2\\
\langle \gamma \rangle \iso k &\hbox{if }  i=2,\,j=2\\
\{0\} & \textnormal{otherwise}
\end{cases}\]
\end{example}

One of the main advantages of the relationship between quivers and coalgebras is that one obtains a combinatorial description of the comodules for a given coalgebra in terms of representations of quivers.  We mention that working with $k$-quivers we maintain this advantage.  Representations of $k$-quivers are defined and their relation to (co)modules discussed, for instance, in \cite[Section 7]{Gabriel} and \cite[Section 5]{Simson07}.

\subsection{``Close'' coalgebra homomorphisms}

Given two coalgebra homomorphisms $\rho, \gamma: C\to D$, write $\rho\sim\gamma$ if 
\begin{align*}
&(\rho-\gamma)(C_0)=0, \quad \mbox{and}\\
&(\rho-\gamma)(C_1)\subseteq D_{0}.
\end{align*}
It is easy to check (cf.\ \cite[Section 3.2]{IM}) that $\sim$ is a congruence relation on $\cat{PCog}$. 
By $\cat{PCog}_{\sim}$ we denote the corresponding quotient category. 

\begin{prop}[cf.\ {\cite[Proposition 4]{TW}}]\label{tiln}
Let $\rho,\gamma:C\to D$ be two homomorphisms in $\cat{PCog}$ such that $\rho\sim\gamma$. Then $(\rho-\gamma)(C_i)\subseteq {D_{i-1}}$, for each $i\geqslant 0$.
\end{prop}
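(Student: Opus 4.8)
The plan is to prove the statement by induction on $i$, writing $\delta := \rho-\gamma$ throughout. The cases $i=0$ and $i=1$ are exactly the two defining conditions of the relation $\rho\sim\gamma$ (recalling the convention $D_{-1}=\{0\}$). For the inductive step the key is the identity
\[\varDelta_D\bigl(\delta(c)\bigr)\;=\;(\rho\otimes\rho-\gamma\otimes\gamma)\varDelta_C(c)\;=\;\bigl(\rho\otimes\delta+\delta\otimes\gamma\bigr)\varDelta_C(c),\]
which holds for every $c\in C$ because $\rho$ and $\gamma$ are coalgebra homomorphisms; the second equality is the elementary rearrangement $\rho\otimes\rho-\gamma\otimes\gamma=\rho\otimes(\rho-\gamma)+(\rho-\gamma)\otimes\gamma$. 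The idea is to feed the inductive hypothesis into the right-hand side to show that $\varDelta_D(\delta(c))$ lies in $D\otimes D_{i-2}+D_0\otimes D$, and then to invoke the recursive description $D_{i-1}=\varDelta_D^{-1}(D\otimes D_{i-2}+D_0\otimes D)$ of the coradical filtration to deduce $\delta(c)\in D_{i-1}$.

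In the inductive step I fix $i\geqslant 2$, assume $\delta(C_j)\subseteq D_{j-1}$ for all $j<i$, and take $c\in C_i$. Using the standard fact that the coradical filtration is a coalgebra filtration, $\varDelta_C(c)\in\sum_{j=0}^{i}C_j\otimes C_{i-j}$, so $(\rho\otimes\delta)\varDelta_C(c)$ lies in $\sum_{j}\rho(C_j)\otimes\delta(C_{i-j})$ and $(\delta\otimes\gamma)\varDelta_C(c)$ in $\sum_{j}\delta(C_j)\otimes\gamma(C_{i-j})$. I then inspect these summands using three facts: $\rho$ and $\gamma$ carry $C_0$ into $D_0$ (Proposition \ref{Coradical filtration and homomorphisms}); $\delta$ vanishes on $C_0$; and $\delta(C_k)\subseteq D_{k-1}\subseteq D_{i-2}$ for $1\leqslant k\leqslant i-1$ by the inductive hypothesis (the case $k=1$ being the second defining condition of $\sim$). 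A term-by-term check then shows that every summand of $(\rho\otimes\delta)\varDelta_C(c)$ and of $(\delta\otimes\gamma)\varDelta_C(c)$ lies in $D\otimes D_{i-2}+D_0\otimes D$; summing, so does $\varDelta_D(\delta(c))$, and the induction closes.

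The only point that needs care is the behaviour of the two ``boundary'' summands $C_0\otimes C_i$ and $C_i\otimes C_0$, for which the inductive hypothesis says nothing about the factor lying in $C_i$. The summand $C_0\otimes C_i$ contributes $0$ to $(\delta\otimes\gamma)\varDelta_C(c)$ (because $\delta$ kills $C_0$) and contributes into $D_0\otimes D$ in $(\rho\otimes\delta)\varDelta_C(c)$ (because $\rho(C_0)\subseteq D_0$); symmetrically, $C_i\otimes C_0$ contributes $0$ to $(\rho\otimes\delta)\varDelta_C(c)$ and lands in $D\otimes D_0\subseteq D\otimes D_{i-2}$ in $(\delta\otimes\gamma)\varDelta_C(c)$ (because $\gamma(C_0)\subseteq D_0$). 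The ``near-boundary'' summands $C_1\otimes C_{i-1}$ and $C_{i-1}\otimes C_1$ are absorbed by the case $\delta(C_1)\subseteq D_0$, and the remaining summands $C_j\otimes C_{i-j}$ with $2\leqslant j\leqslant i-2$ are immediate. So the only substantive decision is choosing the correct target space $D\otimes D_{i-2}+D_0\otimes D$ in which to trap $\varDelta_D(\delta(c))$; the rest is bookkeeping. One could instead run the same induction through the Taft--Wilson decomposition of Proposition \ref{pointeddescription}(ii), but this is not necessary.
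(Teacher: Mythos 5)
Your proposal is correct and follows essentially the same route as the paper's proof: the same induction, the same rearrangement $\rho\otimes\rho-\gamma\otimes\gamma=\rho\otimes(\rho-\gamma)+(\rho-\gamma)\otimes\gamma$, the same use of $\varDelta_C(C_i)\subseteq\sum_j C_j\otimes C_{i-j}$, and the same conclusion via $D_{i-1}=\varDelta_D^{-1}(D\otimes D_{i-2}+D_0\otimes D)$. The only difference is cosmetic: you spell out the boundary summands individually, whereas the paper absorbs them into a single indexed sum using the convention $D_{-1}=\{0\}$.
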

\begin{proof}
We proceed by induction on $i$. Suppose that $(\rho-\gamma)(C_i)\subseteq {D_{i-1}}$ for every $i\leqslant n-1$. Observe that \[\varDelta_D(\rho-\gamma)=(\rho\otimes\rho-\gamma\otimes\gamma)\varDelta_C=(\rho\otimes(\rho-\gamma)+(\rho-\gamma)\otimes\gamma)\varDelta_C\] 
since $\rho$ and $\gamma$ are coalgebra homomorphisms. Also, \cite[Corollary 9.1.7]{Sweedler} shows that $\varDelta_C(C_n)\subseteq \sum_{i=0}^{n} C_i\otimes C_{n-i}$. Thus (applying Proposition \ref{Coradical filtration and homomorphisms}) we get
\begin{align*}
\varDelta_D(\rho-\gamma)(C_{n})&=  (\rho\otimes(\rho-\gamma)+(\rho-\gamma)\otimes\gamma)\varDelta_C(C_{n})\\
&\subseteq  (\rho\otimes(\rho-\gamma)+(\rho-\gamma)\otimes\gamma)\Bigl(\sum_{i=0}^{n} C_i\otimes C_{n-i}\Bigr)\\
&\subseteq  \sum_{i=0}^n D_i\otimes D_{n-1-i}+\sum_{i=0}^n D_{i-1}\otimes D_{n-i}\\
 & =  \sum_{i=0}^{n-1} D_i\otimes D_{n-1-i}\subseteq D\otimes D_{n-2}+D_0\otimes D.
\end{align*}
Hence $(\rho-\gamma)(C_{n})\subseteq D_{n-1}$.
\end{proof}

Working in the quotient category $\cat{PCog}_{\sim}$ rather than $\cat{PCog}$, much of the important information is preserved.  For instance:

\begin{prop} 
The projection functor $\Pi:\cat{PCog}\to \cat{PCog}_{\sim}$ reflects isomorphisms.  That is, if $\rho:C\to D$ is a coalgebra homomorphism such that $\Pi(\rho): C\to D$ is an isomorphism, then $\rho$ is an isomorphism.
\end{prop}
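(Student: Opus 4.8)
The plan is to show that a coalgebra homomorphism $\rho:C\to D$ in $\cat{PCog}$ whose class $\Pi(\rho)$ is an isomorphism in $\cat{PCog}_{\sim}$ must itself be injective and surjective. Since $\Pi(\rho)$ is an isomorphism, there is a homomorphism $\sigma:D\to C$ with $\rho\sigma\sim\id_D$ and $\sigma\rho\sim\id_C$; in particular, by the definition of $\sim$, the maps $\rho\sigma$ and $\id_D$ agree on $D_0$, and $(\rho\sigma-\id_D)(D_1)\subseteq D_0$, and symmetrically for $\sigma\rho$ on $C$. Invoking Proposition \ref{tiln}, we upgrade this to $(\rho\sigma-\id_D)(D_i)\subseteq D_{i-1}$ and $(\sigma\rho-\id_C)(C_i)\subseteq C_{i-1}$ for all $i$.

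First I would prove injectivity of $\rho$. Suppose $\rho(x)=0$ for some nonzero $x\in C$. Since $C=\bigcup_n C_n$, pick the minimal $n$ with $x\in C_n$, so $x\notin C_{n-1}$. Then $\sigma\rho(x)=0$, so $x=(\id_C-\sigma\rho)(x)\in C_{n-1}$ by the refined estimate above, a contradiction. Hence $\rho$ is injective.

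Next I would prove surjectivity. Fix $y\in D$, say $y\in D_n$, and argue by induction on $n$ that $y\in\rho(C)$; the base case $n=-1$ (or $n=0$, handled directly) is trivial. For the inductive step, consider $y':=\rho\sigma(y)$, which lies in $\rho(C)$. Then $y-y'=(\id_D-\rho\sigma)(y)\in D_{n-1}$ by the refined estimate, so by the inductive hypothesis $y-y'\in\rho(C)$, and therefore $y=y'+(y-y')\in\rho(C)$. This establishes surjectivity, and combined with injectivity (and the standard fact that a bijective coalgebra homomorphism is an isomorphism of coalgebras, as its inverse is automatically a coalgebra map) we conclude $\rho$ is an isomorphism.

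The only subtlety — and the step I would double-check most carefully — is the correct bookkeeping of indices when applying Proposition \ref{tiln}: one must be sure that $\rho\sigma\sim\id_D$ genuinely holds in $\cat{PCog}$ (not merely that $\Pi(\rho)$ has a one-sided inverse), which follows from $\Pi(\rho)$ being a genuine isomorphism in the quotient category, and that Proposition \ref{tiln} applies to the pair $(\rho\sigma,\id_D)$ of endomorphisms of $D$ and the pair $(\sigma\rho,\id_C)$ of endomorphisms of $C$. Once that is in place, both the injectivity argument (descend the filtration degree of a putative kernel element) and the surjectivity argument (lift modulo lower filtration degree and induct) are short and formal. There is no real analytic or combinatorial obstacle here; the content is entirely the filtration estimate of Proposition \ref{tiln}.
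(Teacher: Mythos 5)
Your proposal is correct and follows essentially the same route as the paper: both arguments rest entirely on Proposition \ref{tiln} applied to a pair of $\sim$-equivalent endomorphisms, proving injectivity by descending the filtration degree of a kernel element and surjectivity by correcting modulo lower filtration degree (your induction on $n$ is just the unwound form of the paper's explicit alternating-sum preimage $c'=\sum_i(-1)^ic_i$). The only cosmetic difference is that the paper first reduces to the case of an endomorphism $\rho\sim\id_C$, whereas you work directly with the two-sided $\sim$-inverse $\sigma$; both are fine.
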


\begin{proof}
It is sufficient to show that for any coalgebra endomorphism $\rho:C\to C$, $\rho\sim\id_C$ implies that $\rho$ is an isomorphism. Let $\rho: C\to C$ be a coalgebra homomorphism such that $\rho\sim \id$. Since $C=\bigcup_{n\geqslant 0}C_n$, any element $c\in C$ belongs to $C_n$ for some $n\in\N$. 

Suppose that $c\in \ker(\rho)$ is not $0$, so that $c\in C_n\!\!\setminus\! C_{n-1}$ for some $n\geqslant 0$. By Proposition \ref{tiln}, \[(\id-\rho)(c)=c-\rho(c)=c\in C_{n-1},\]
contradicting our hypothesis. Hence $c=0$ and, consequently, $\rho$ is injective. 

Let $c_0=c\in C_n\!\!\setminus\! C_{n-1}$ and define recursively $c_i=\rho(c_{i-1})-c_{i-1}\in C_{n-i}$, for $i=1,\dots,n$. This sequence stops at $\rho(c_n)-c_n=0$. Writing $c'=\sum_{i=0}^{n}(-1)^{i}c_i$ we get $\rho(c')=c$. Thus $\rho$ is surjective and this completes the proof.
\end{proof}

\begin{prop}
If $\rho:C\to D$ is an injective map in $\cat{PCog}$ then its image in $\cat{PCog}_{\sim}$ is a monomorphism. 
\end{prop}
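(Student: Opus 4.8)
The plan is to unwind what it means for $\Pi(\rho)$ to be a monomorphism in $\cat{PCog}_{\sim}$. Since $\cat{PCog}_{\sim}$ has the same objects as $\cat{PCog}$ (only the morphisms being identified), it suffices to take an arbitrary pointed coalgebra $B$ together with coalgebra homomorphisms $\alpha,\beta\colon B\to C$ such that $\rho\alpha\sim\rho\beta$, and to show $\alpha\sim\beta$; that is, $(\alpha-\beta)(B_0)=0$ and $(\alpha-\beta)(B_1)\subseteq C_0$.

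First I would handle the degree-zero condition. Applying $\rho$ to $(\alpha-\beta)(B_0)$ gives $(\rho\alpha-\rho\beta)(B_0)$, which is zero because $\rho\alpha\sim\rho\beta$. Since $\rho$ is injective, $(\alpha-\beta)(B_0)=0$.

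Next the degree-one condition. Because $B$ is pointed and $\alpha,\beta$ are coalgebra homomorphisms, Proposition \ref{Coradical filtration and homomorphisms} gives $\alpha(B_1),\beta(B_1)\subseteq C_1$, hence $(\alpha-\beta)(B_1)\subseteq C_1$. On the other hand $\rho$ carries this space into $D_0$, since $(\rho\alpha-\rho\beta)(B_1)\subseteq D_0$ by hypothesis. So every $x\in(\alpha-\beta)(B_1)$ satisfies $x\in C$ and $\rho(x)\in D_0$; by Lemma \ref{Lemma injective mod Cn} applied with $n=0$ (which says precisely that $\rho(x)\in D_0$ forces $x\in C_0$), we conclude $x\in C_0$. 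Therefore $(\alpha-\beta)(B_1)\subseteq C_0$, and $\alpha\sim\beta$.

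There is no serious obstacle here: the only point to watch is that $\alpha-\beta$ is merely a linear map, not a coalgebra homomorphism, so one cannot argue structurally with it — but both defining conditions of $\sim$ are linear conditions on $B_0$ and $B_1$, and injectivity of $\rho$ together with Lemma \ref{Lemma injective mod Cn} dispose of them directly.
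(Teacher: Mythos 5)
Your proposal is correct and follows essentially the same route as the paper: reduce to representatives, kill the $B_0$ condition by injectivity of $\rho$, and deduce the $B_1$ condition from Lemma \ref{Lemma injective mod Cn} applied to $C/C_0\to D/D_0$. The only difference is your invocation of Proposition \ref{Coradical filtration and homomorphisms} to place $(\alpha-\beta)(B_1)$ inside $C_1$, which is harmless but unnecessary, since the lemma already applies to arbitrary elements of $C$ with image in $D_0$.
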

\begin{proof}
 Suppose $\gamma,\sigma: B\to C$ are two coalgebra homomorphisms such that $\rho\circ\gamma\sim \rho\circ\sigma$.
For any $b\in B_0$ we have
\[(\rho\circ\gamma-\rho\circ\sigma)(b)=\rho(\gamma(b)-\sigma(b))=0\Longleftrightarrow \gamma(b)-\sigma(b)=0,\]
since $\rho$ is injective. For $b'\in B_1$, we have
\[(\rho\circ\gamma-\rho\circ\sigma)(b')=\rho(\gamma(b')-\sigma(b'))\subseteq D_0\stackrel{\ref{Lemma injective mod Cn}}{\Longleftrightarrow} \gamma(b')-\sigma(b')\subseteq C_0.\]
Thus $\gamma\sim\sigma$ and the result follows.
\end{proof}

\subsection{Path coalgebra and Gabriel $k$-quiver functors}\label{subsection Cofunctors definitions}

We define functors between the categories introduced above.

Given a $k$-quiver $VQ=(VQ_0,VQ_{g,h})$, denote by $\Sigma_Q=(kVQ_0,\varDelta_0, \varepsilon_0)$ the group-like coalgebra of $VQ_0$, and by $V_Q=(VQ_1,\mu,\nu)$ the $\Sigma_Q$-bicomodule $VQ_1=\bigoplus_{g,h\in VQ_0} VQ_{g,h}$ with structure maps:
\[\mu (m_{g,h})= h\otimes m_{g,h}, \qquad
\nu (m_{g,h})= m_{g,h}\otimes g,\]
for each $m_{g,h}\in VQ_{g,h}$.

Define the \emph{path coalgebra} $\CK[VQ]$ as the cotensor coalgebra $\Cot_{\Sigma_Q}(V_Q)$.
For any $\varphi=(\varphi_0,\varphi_{g,h})$ in $\Hom_{k\dash\cat{Quiv}}(VQ,VR)$, the universal property of the cotensor coalgebra, Proposition \ref{cUniversalProperty}, ensures the existence of a unique homomorphism $\rho\in\Hom_{\cat{PCog}}(\CK[VQ],\CK[VR])$ making the following diagrams commutative: 
\[\begin{tikzcd}
\Cot_{\Sigma_Q}(V_Q) \arrow[d,"\pi_0'"'] \arrow[r, dashed, "\rho"] \arrow[dr, "\rho_0"] & \Cot_{\Sigma_R}(V_R)\arrow[d,"\pi_0"] \\
\Sigma_Q \arrow[r, "\varphi_0"] & \Sigma_R
\end{tikzcd}
\quad
\begin{tikzcd}
\Cot_{\Sigma_Q}(V_Q) \arrow[d,"\pi_1'"'] \arrow[r, dashed, "\rho"] \arrow[dr, "\rho_1"] & \Cot_{\Sigma_R}(V_R)\arrow[d,"\pi_1"] \\
V_Q \arrow[r, "\varphi_1"] & V_R
\end{tikzcd}\]
where $\pi_i',\pi_i$ are the canonical projections, $\varphi_i$ are linear extensions of the maps defined by $\varphi$, and $\rho_i\coloneqq \varphi_i\circ\pi_i'$, for $i=0,1$. Set $\CK[\varphi]\coloneqq\rho$.

\begin{example}
If $\iota:VQ\incl VR$ is an inclusion of $k$-quivers, then $\CK[\iota]:\CK[VQ]\to \CK[VR]$ is the corresponding inclusion of coalgebras.
\end{example}

These constructions yield a covariant functor $\CK[-]:k\dash\cat{Quiv}\to \cat{PCog}$. Denote by $\wCK[-]:k\dash\cat{Quiv}\rightarrow \cat{PCog}_{\sim}$ the covariant functor $\Pi\circ \CK[-]$. 

\medskip

Let $C$ be a pointed coalgebra. Define the \textit{Gabriel $k$-quiver} of $C$ by 
$$\CGQ(C)\coloneqq(\CGQ(C)_0,\CGQ(C)_{g,h}),$$ where $\CGQ(C)_0\coloneqq\Gr(C)$ and for each pair of vertices $g,h\in \CGQ(C)_0$, the vector space $\CGQ(C)_{g,h}$ is defined to be $\qPri{g,h}(C)$ (see after Proposition \ref{pointeddescription}).

Let $\rho\in \Hom_{\cat{PCog}}(C,D)$. Observe that, by the isomorphism theorems for comodules, there exists a unique comodule homomorphism $\bar{\rho}:\faktor{C\!}{\!C_0}\to \faktor{D\!}{\!D_0}$ such that the following diagram is commutative:
\[\begin{tikzcd}
C\rar{\rho}\ar[d,"{\pi^{C}}"'] & D\dar{\pi^D}\\
\faktor{C\!}{\!C_0}\rar{\bar{\rho}} & \faktor{D\!}{\!D_0}
\end{tikzcd}\]
The maps
\[\varphi_0\coloneqq \left.\rho\right|_{\Gr(C)}:\Gr(C)\to \Gr(D),\quad\varphi_{g,h}\coloneqq \left.{\bar{\rho}}\right|_{\qPri{g,h}(C)}:\qPri{g,h}(C)\to\qPri{\varphi_0(g),\varphi_0(h)}(D),\]
define a map of $k$-quivers $\varphi=(\varphi_0,\varphi_{g,h}):\CGQ(C)\to \CGQ(D)$.  This construction yields a covariant functor
$\CGQ(-):\cat{PCog}\to k\dash\cat{Quiv}$. Furthermore,

\begin{prop}
There is a unique functor  
$\wCGQ(-):\cat{PCog}_{\sim} \to k\dash\cat{Quiv}$ such that $\CGQ(-)=\wCGQ(-)\circ \Pi$.
\end{prop}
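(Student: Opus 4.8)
The plan is to show that the functor $\CGQ(-):\cat{PCog}\to k\dash\cat{Quiv}$ factors through the projection $\Pi:\cat{PCog}\to\cat{PCog}_\sim$, and then invoke the universal property of the quotient category. Concretely, $\cat{PCog}_\sim$ has the same objects as $\cat{PCog}$ and $\Pi$ is the identity on objects, so on objects we are forced to set $\wCGQ(C)\coloneqq\CGQ(C)$; there is nothing to choose. The entire content is therefore the statement about morphisms: whenever $\rho\sim\gamma$ in $\Hom_{\cat{PCog}}(C,D)$, we must have $\CGQ(\rho)=\CGQ(\gamma)$ as maps of $k$-quivers. Granting this, the map on morphism classes $\wCGQ([\rho])\coloneqq\CGQ(\rho)$ is well defined, it is automatically functorial because $\CGQ(-)$ is and $\Pi$ is the identity on objects and surjective on morphisms, and it is the unique functor with $\CGQ(-)=\wCGQ(-)\circ\Pi$ for the same reason (any two such functors agree on objects and on every morphism class, since every class has a representative).

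So the one real step is: $\rho\sim\gamma$ implies $\CGQ(\rho)=\CGQ(\gamma)$. Recall $\CGQ(\rho)$ consists of $\varphi_0=\rho|_{\Gr(C)}$ together with the maps $\varphi_{g,h}=\bar\rho|_{\qPri{g,h}(C)}$, where $\bar\rho:C/C_0\to D/D_0$ is induced by $\rho$. First I would check the vertex component: since $(\rho-\gamma)(C_0)=0$ and $\Gr(C)\subseteq C_0$, we get $\rho(g)=\gamma(g)$ for every $g\in\Gr(C)$, so $\varphi_0=\psi_0$ (writing $\psi$ for $\CGQ(\gamma)$); in particular the two maps of $k$-quivers have the same underlying vertex map, so it makes sense to compare the linear components. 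Then for the arrow component, the key observation is that $\bar\rho=\bar\gamma$ as maps $C/C_0\to D/D_0$: indeed $\pi^D\circ(\rho-\gamma)=\bar{(\rho-\gamma)}\circ\pi^C$ and $(\rho-\gamma)(C_1)\subseteq D_0$ means $\pi^D(\rho-\gamma)$ kills $C_1$, hence kills $\qPri{g,h}(C)$ (which lives inside $C_1/C_0$ by Proposition \ref{pointeddescription}); equivalently $\bar\rho$ and $\bar\gamma$ agree on the image of $C_1$ in $C/C_0$, which is exactly $\bigoplus_{g,h}\qPri{g,h}(C)$. Restricting to $\qPri{g,h}(C)$ gives $\varphi_{g,h}=\psi_{g,h}$ for all $g,h$, so $\CGQ(\rho)=\CGQ(\gamma)$.

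I do not anticipate a genuine obstacle here; the proposition is essentially a bookkeeping consequence of the definition of $\sim$ having been tailored precisely so that it sits inside the kernel of the Gabriel $k$-quiver construction. The only point requiring a little care is making sure the comparison of the linear maps $\varphi_{g,h}$ and $\psi_{g,h}$ is legitimate — i.e. that they have the same source and target — which is why one verifies $\varphi_0=\psi_0$ first. One should also be slightly careful that $\qPri{g,h}(C)$ is being regarded inside $C/C_0$ via the isomorphism $C_1/C_0\iso\bigoplus_{g,h}\qPri{g,h}(C)$ recorded just before Proposition \ref{Coradical filtration and homomorphisms}, so that "$\bar\rho$ restricted to $\qPri{g,h}(C)$" is unambiguous; with that identification in place the argument is immediate. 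If one wanted to be even more economical, one could simply observe that $\sim$ is by construction a congruence on $\cat{PCog}$ and that $\CGQ(-)$ sends $\sim$-equivalent morphisms to equal morphisms (the computation above), which is exactly the hypothesis of the universal property of quotient categories, yielding existence and uniqueness of $\wCGQ(-)$ at once.
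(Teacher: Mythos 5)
Your proposal is correct and takes essentially the same route as the paper, whose proof is simply the remark that ``one checks'' the well-definedness using Remark \ref{pointedkgc} and Proposition \ref{pointeddescription}; you have written out exactly that check (vertex maps agree because $(\rho-\gamma)(C_0)=0$, arrow maps agree because $(\rho-\gamma)(C_1)\subseteq D_0$ forces $\bar\rho=\bar\gamma$ on $C_1/C_0\iso\bigoplus_{g,h}\qPri{g,h}(C)$), together with the standard uniqueness argument from $\Pi$ being the identity on objects and full.
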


\begin{proof}
Using Remark \ref{pointedkgc} and Proposition \ref{pointeddescription}, one checks that defining $\wCGQ(C)$ to be $\CGQ(C)$ and $\wCGQ([\rho])$ to be $\CGQ(\rho)$, we obtain a covariant functor satisfying the claim. It is clearly unique.
\end{proof}

\begin{example}\label{example triangle}
A simple example of a path coalgebra is given by the $k$-quiver
\[VQ=\begin{tikzcd}
 \circ_1 \arrow[rr,"{\langle a \rangle}"] \arrow[dr, "{\langle b \rangle}"'] & & \circ_3\\
 & \circ_2 \ar[ur, "{\langle c \rangle}"']
\end{tikzcd}.\]
The coalgebra $\CK[VQ]$ is a 7 dimensional vector space with basis $\{e_1,e_2,e_3,a,b,c,cb\}$, where $e_i$ are group-like elements. The comultiplication of $cb$, for example, is given by
$$\varDelta(cb)=e_3\otimes cb+c\otimes b+cb\otimes e_1.$$ 
Let $\rho:\CK[VQ]\to \CK[VQ]$ be the linear map that sends $a$ to $a+(e_3-e_1)$ and fixes all other elements of the given basis. Then $\rho$ is a coalgebra automorphism and $\CGQ(\rho)=\id_{VQ}$.  Thus $\CGQ(-)$ is not faithful.
\end{example}

\section{Adjunction and consequences}\label{Section Coalgebra adjunction}
\subsection{The main result and its proof}

We prove that the functor $\wCK[-]$ is right adjoint to $\wCGQ(-)$. To do this, we show that the counit $\varepsilon : \wCGQ(\wCK[-])\to \tn{id}_{k\dash\cat{Quiv}}$ and unit $\eta : \tn{id}_{\cat{PCog}_{\sim}} \to \wCK[\wCGQ(-)]$ are given as follows:
\begin{itemize}
\item Given $VQ\in k\dash\cat{Quiv}$,
$$\varepsilon_{VQ} : \wCGQ(\wCK[VQ])\to VQ$$
is the $k$-quiver map sending $e_i\in \tn{G}(\wCK[VQ])$ to $e_i \in VQ$ and for any $e,f \in \tn{G}(\wCK[VQ])$ the element $x+\langle e-f \rangle\in \qPri{e,f}(\wCK[VQ])$ is sent  to $x\in VQ_{e,f}$.  The maps $\varepsilon_{VQ}$ are easily checked to be the components of a natural transformation $\varepsilon: \wCGQ(\wCK[-])\to \tn{id}_{k\dash\cat{Quiv}}$;
\item Given $C\in \cat{PCog}$, choose a coalgebra splitting $s:C\to C_0$ of the inclusion $i_0:C_0\to C$ (which exists because the coradical is separable \cite[Section 2.3.4]{Abe}). We treat $C$ as a $C_0$-bicomodule via $s$ and choose a splitting of the inclusion of bicomodules $i_1:C_1\to C$ (which exists because $C_1$ is an injective comodule \cite[Theorem 3.1.5]{DNR}).
Combining this splitting with the natural projection map $C_1 \to C_1/C_0$ we get a map $t:C\to C_1/C_0$. 
The maps $s,t$ define (by the universal property of the cotensor coalgebra) the map $\eta_C^{s,t}:C\to \Cot_{C_0}\Bigl(\faktor{C_1}{C_0}\Bigr)=\wCK[\wCGQ(C)]$. 
\end{itemize}

The congruence class of $\eta_C^{s,t}$ in $\cat{PCog}_{\sim}$ does not depend on the choice of splittings $s,t$, so we may denote $\eta_C^{s,t}$ simply by $\eta_C$. Indeed suppose that $s,t$ and $s',t'$ are two different choices, and $\eta_C^{s,t}$, $\eta_C^{s',t'}$ are the corresponding maps. We must confirm that $\eta_C^{s,t}\sim\eta_C^{s',t'}$. One has 
\begin{align*}
    (\eta_C^{s,t}-\eta_C^{s',t'})|_{C_0}=\pi_0(\eta_C^{s,t}-\eta_C^{s',t'})i_0=si_0-s'i_0=0
\end{align*}
and the relation $(\eta_C^{s,t}-\eta_C^{s',t'})(C_1)\subseteq \CK[\CGQ(C)]_0$ may be checked in a similar manner.

\begin{remark}
The map $\eta_C$ is the image in $\cat{PCog}_{\sim}$ of the coalgebra embedding considered in \cite[Corollary 1]{Rad82} and \cite[(4.8)]{Woodcock} (cf.\ also \cite[Theorem 4.3]{CMo} and \cite[Theorem 3.1]{CZ06}).
\end{remark}

\begin{lemma}
The map $\eta_C:C\to \wCK[\wCGQ(C)]$ is the component at $C$ of a natural transformation $\eta : \tn{id}_{\cat{PCog}_{\sim}} \to \wCK[\wCGQ(-)]$.
\end{lemma}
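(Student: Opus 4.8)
The plan is to verify the naturality square of $\eta$ directly in $\cat{PCog}_{\sim}$. Fix a morphism of $\cat{PCog}_{\sim}$, represented by some $\rho\in\Hom_{\cat{PCog}}(C,D)$. Unwinding the definitions of $\wCGQ(-)$ and $\wCK[-]$, the functor $\wCK[\wCGQ(-)]$ sends this morphism to the class of $\CK[\CGQ(\rho)]$, so naturality is equivalent to the single relation $\eta_D\circ\rho\sim \CK[\CGQ(\rho)]\circ\eta_C$ between coalgebra homomorphisms $C\to\Cot_{D_0}(D_1/D_0)=\wCK[\wCGQ(D)]$; write $f$ and $g$ for these two maps.

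First I would record the relevant structure of the target. The coradical $D_0$ is cosemisimple, so $\Cot_{D_0}(D_1/D_0)$ is coradically graded; in particular its coradical is $D_0$, its first filtration term is $D_0\oplus(D_1/D_0)$, the projection $\pi_0$ restricts to the identity on $D_0$, and the kernel of $\pi_1$ on $D_0\oplus(D_1/D_0)$ is exactly $D_0$. Since $C$ is pointed, Proposition \ref{Coradical filtration and homomorphisms} gives $f(C_i),g(C_i)\subseteq\Cot_{D_0}(D_1/D_0)_i$ for $i=0,1$. Hence $f\sim g$ reduces to checking the two equalities $\pi_0 f|_{C_0}=\pi_0 g|_{C_0}$ and $\pi_1 f|_{C_1}=\pi_1 g|_{C_1}$.

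For the first equality, I would use the defining identities $\pi_0\eta_D=s_D$, $\pi_0'\eta_C=s_C$ and $\pi_0\circ\CK[\CGQ(\rho)]=\CGQ(\rho)_0\circ\pi_0'=\rho|_{C_0}\circ\pi_0'$ to obtain $\pi_0 f=s_D\circ\rho$ and $\pi_0 g=\rho|_{C_0}\circ s_C$; restricting to $C_0$ and using $\rho(C_0)\subseteq D_0$ together with the fact that $s_D$ is the identity on $D_0$ and $s_C$ is the identity on $C_0$, both sides equal $\rho|_{C_0}$. (Off $C_0$ the maps $\pi_0 f$ and $\pi_0 g$ genuinely differ, since the splittings $s_C,s_D$ need not be compatible with $\rho$; this is exactly why only the congruence, and not an equality $f=g$, can be expected.) For the second equality, I would use $\pi_1\eta_D=t_D$, $\pi_1'\eta_C=t_C$ and $\pi_1\circ\CK[\CGQ(\rho)]=\CGQ(\rho)_1\circ\pi_1'$ to obtain $\pi_1 f=t_D\circ\rho$ and $\pi_1 g=\CGQ(\rho)_1\circ t_C$. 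On $C_1$ and on $D_1$ respectively, the bicomodule splittings built into $t_C$ and $t_D$ restrict to the identity, so $t_C|_{C_1}$ and $t_D|_{D_1}$ are just the canonical projections onto $C_1/C_0$ and $D_1/D_0$; and since $\rho(C_1)\subseteq D_1$ and, under the identification $C_1/C_0\cong\bigoplus_{g,h}\qPri{g,h}(C)$, the linear map $\CGQ(\rho)_1$ is precisely the restriction of $\bar\rho$ to $C_1/C_0$, the equality $\pi_1 f|_{C_1}=\pi_1 g|_{C_1}$ is simply the restriction to $C_1$ of the commuting square that defines $\bar\rho$.

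The main obstacle will be bookkeeping rather than conceptual content: one must keep track of the $C_0$-bicomodule structure on $C$ induced by $s_C$, of the several projections and splittings entering the construction of $\eta$, and of the identification of $\CGQ(\rho)_1$ with $\bar\rho$ on the first graded piece — all while being careful to prove the congruence $f\sim g$ and not the (false) on-the-nose equality $f=g$.
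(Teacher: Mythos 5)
Your proposal is correct and follows essentially the same route as the paper: both verify the naturality square by reducing the congruence $\eta_D\rho\sim\wCK[\wCGQ(\rho)]\eta_C$ to the vanishing of the $\pi_0$-component on $C_0$ and the $\pi_1$-component on $C_1$, using the defining identities of the splittings $s,t$ and the universal property of the cotensor coalgebra. The only difference is one of exposition: the paper carries out the $C_0$ computation explicitly and leaves the $C_1$ case as ``similarly confirmed,'' whereas you spell out the second half (via the identification of $\CGQ(\rho)_1$ with $\bar\rho$ on $C_1/C_0$), which is a welcome elaboration rather than a deviation.
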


\begin{proof}
Let $\rho: C\to D$ be a morphism in $\cat{PCog}$. We must check that the following square commutes in $\cat{PCog}_{\sim}$:
\[\begin{tikzcd}
C \ar[rr,"\rho"] \ar[d,swap,"{\eta_C}"] && D\ar[d,"{\eta_D}"] \\
\wCK[\wCGQ(C)] \ar[rr,swap,"{\wCK[\wCGQ(\rho)]}"] && \wCK[\wCGQ(D)] \\
\end{tikzcd}
\]
As above choose maps $s,t$ which split inclusions $i_0^C:C_0\to C$ and $C_1\to C$ respectively, and $s', t'$ which split  inclusions $i_0^D:D_0\to D$ and $D_1 \to D$ respectively. Denote by $\tilde \rho$ the map $\wCK[\wCGQ(\rho)]$. 
We have that 
\begin{align*}
    (\eta_D^{s',t'}\rho-\tilde \rho \eta_C^{s,t})|_{C_0}&=\pi_0^D(\eta_D^{s',t'}\rho-\tilde \rho \eta_C^{s,t})i_0^C\\
    &=(s'\rho\, i_0^C-\rho|_{C_0}\pi_0^C \eta_C^{s,t})i_0^C\\
    &=s' i_0^D\rho|_{C_0}-\rho|_{C_0} s\, i_0^C \\
    &=\rho|_{C_0}-\rho|_{C_0} \\
    &=0.
\end{align*}
One similarly confirms that $(\eta_D^{s',t'}\rho-\tilde \rho \eta_C^{s,t})(C_1)\subseteq D_0$. Hence the classes of $\eta_D^{s',t'}\rho$ and $\tilde \rho \eta_C^{s,t}$ are equal in $\cat{PCog}_{\sim}$ and $\eta$ is a natural transformation.
\end{proof}
 
\begin{theorem}\label{Theorem Adjunction Level 1}
The functor $\wCK[-]:k\dash\cat{Quiv} \to \cat{PCog}_{\sim}$ is right adjoint to the functor $\wCGQ(-):\cat{PCog}_{\sim} \to k\dash\cat{Quiv}$.
\end{theorem}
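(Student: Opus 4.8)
The plan is to verify the triangle identities for the unit $\eta$ and counit $\varepsilon$ already constructed, which by the standard characterization of adjoint functors suffices to establish that $\wCGQ(-) \dashv \wCK[-]$. Concretely, I must show that for every $k$-quiver $VQ$ the composite
\[
\wCGQ(VQ) \xrightarrow{\;\wCGQ(\eta_{VQ})\;} \wCGQ\bigl(\wCK[\wCGQ(VQ)]\bigr) \xrightarrow{\;\varepsilon_{\wCGQ(VQ)}\;} \wCGQ(VQ)
\]
is the identity in $k\dash\cat{Quiv}$, and dually that for every pointed coalgebra $C$ the composite
\[
\wCK[C] \xrightarrow{\;\eta_{\wCK[C]}\;} \wCK\bigl[\wCGQ(\wCK[C])\bigr] \xrightarrow{\;\wCK[\varepsilon_{C}]\;} \wCK[C]
\]
equals the class of the identity in $\cat{PCog}_{\sim}$.

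For the first identity I would unwind the definitions concretely. Given $VQ = (VQ_0, VQ_{g,h})$, the coalgebra $\wCK[VQ] = \Cot_{\Sigma_Q}(V_Q)$ is coradically graded, so its group-like elements are exactly the $e_i$ for $i\in VQ_0$, its coradical is $kVQ_0 = \Sigma_Q$, and $\wCK[VQ]_1/\wCK[VQ]_0 \iso V_Q$; thus $\wCGQ(\wCK[VQ])$ is canonically identified with $VQ$ itself. Tracing $\eta_{VQ}$ through this identification: because $\wCK[VQ]$ is graded one may take the canonical splittings $s,t$ given by projection onto the degree-$0$ and degree-$1$ components, and with those choices $\eta_{VQ}^{s,t}$ is literally the identity on $\wCK[VQ]$, so $\wCGQ(\eta_{VQ})$ is the identity map $VQ\to VQ$. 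Composing with $\varepsilon_{\wCGQ(VQ)}$, which by definition sends $e_i\mapsto e_i$ and $x+\langle e-f\rangle \mapsto x$, again the identity under our identification, gives the identity as required. The only subtlety is checking that the class $\eta_{VQ}$ in $\cat{PCog}_{\sim}$ is represented by this graded choice — but this is precisely the content of the earlier observation that $\eta_C^{s,t}$ is independent of the splittings modulo $\sim$.

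For the second identity, let $C$ be pointed and write $E := \wCK[C] = \Cot_{C_0}(C_1/C_0)$, which is coradically graded. We need $\wCK[\varepsilon_C]\circ \eta_E \sim \id_E$. Since both $\eta_E^{s,t}$ (computed with the graded splittings on $E$) and $\wCK[\varepsilon_C]$ are coalgebra homomorphisms, and since $\sim$ is a congruence, it is enough to check agreement on $E_0$ and modulo $E_0$ on $E_1$. On $E_0 = C_0$ both composites restrict to the identity; on $E_1/E_0 \iso C_1/C_0$ one checks that $\varepsilon_C$ acts as the canonical identification $\wCGQ(E)_{g,h} = \overline{\Pri{g,h}}(E) \to (C_1/C_0)_{g,h}$ and $\eta_E$ induces the identity, so the composite induces the identity on $E_1/E_0$, hence $\wCK[\varepsilon_C]\circ\eta_E$ differs from $\id_E$ only by a map landing in $E_0$ on $E_1$ — i.e.\ they are $\sim$-equivalent. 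I expect the main obstacle to be bookkeeping: carefully justifying, via the universal property of the cotensor coalgebra (Proposition \ref{cUniversalProperty}) together with Proposition \ref{pointeddescription} and the coradically-graded structure, that the relevant coalgebra maps are determined by their degree-$0$ and degree-$1$ behaviour, and that the identifications $\wCGQ(\wCK[VQ])\iso VQ$ and $\wCGQ(\wCK[C])$-data are compatible with $\varepsilon$ and $\eta$ on the nose rather than merely up to $\sim$. Once those compatibilities are pinned down, both triangle identities reduce to the elementary computations sketched above.
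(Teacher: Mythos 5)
Your overall strategy --- establishing the adjunction by verifying the two triangle identities for the unit $\eta$ and counit $\varepsilon$ already constructed --- is exactly the paper's, and your treatment of the identity $\wCK[\varepsilon_{VQ}]\circ\eta_{\wCK[VQ]}\sim\id_{\wCK[VQ]}$ (show the difference with the identity vanishes on the coradical and sends the degree $\leqslant 1$ part into the coradical) matches the paper's computation. The problem is the other triangle identity. Correctly stated it reads: for \emph{every pointed coalgebra} $C$, the composite $\varepsilon_{\wCGQ(C)}\circ\wCGQ(\eta_C)$ is the identity of $\wCGQ(C)$. You have swapped the quantifiers --- your first displayed composite only typechecks when ``$VQ$'' denotes a coalgebra, and your second only when ``$C$'' denotes a $k$-quiver --- and this slip propagates into the verification: you prove the first identity only for coalgebras of the form $\wCK[VQ]$, by choosing the graded splittings so that $\eta^{s,t}_{\wCK[VQ]}$ is ``literally the identity''. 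That argument is unavailable for a general pointed coalgebra $C$: there $\eta_C : C\to\Cot_{C_0}(C_1/C_0)$ is not an endomorphism, there is no grading on $C$ to split along, and the path coalgebras form a proper subclass of $\cat{PCog}$ (the hereditary ones), so the special case does not imply the general statement.

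The missing step is easy but must be carried out for arbitrary $C$: on vertices, $\eta_C(g)=s(g)=g$ for $g\in\Gr(C)$ because $s$ splits the inclusion $C_0\to C$; on arrow spaces, the degree-one component $t$ of $\eta_C$ restricts on $C_1$ to the canonical projection $C_1\to C_1/C_0$, so the induced map $\qPri{g,h}(C)\to\qPri{g,h}\bigl(\wCK[\wCGQ(C)]\bigr)$ is precisely the identification that $\varepsilon_{\wCGQ(C)}$ inverts by definition. With that computation in place (this is what the paper dismisses as ``a straightforward verification''), your argument closes up. Note also that your restriction of the second identity to $k$-quivers of the form $\wCGQ(C)$ is harmless only because every $k$-quiver is isomorphic to such a $\wCGQ(C)$ and the triangle identity transports along isomorphisms by naturality; it would be cleaner to quantify over all $VQ$ directly, which your computation in fact permits.
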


\begin{proof}
We check that the counit-unit equations hold. That is, that for any $C\in \cat{PCog}_{\sim}$,
$$
    \tn{id}_{\wCGQ(C)}=\varepsilon_{\wCGQ(C)}\circ \wCGQ(\eta_c)
$$
and that for any $VQ\in k\dash\cat{Quiv}$,
$$
    \tn{id}_{\wCK[VQ]}=\wCK[\varepsilon_{VQ}]\circ \eta_{\wCK[VQ]}.
$$
The first equality is a straightforward verification using the definitions of the unit and counit. The second equality translates as $\CK[\varepsilon_{VQ}]\circ \eta^{s,t}_{\CK[VQ]} \sim \tn{id}_{\CK[VQ]}$,
where $s,t$ are two splittings as in the construction of the unit $\eta$ and $\eta^{s,t}_{\CK[VQ]}$ is the corresponding morphism
$$
   \eta^{s,t}_{\CK[VQ]}:  \CK[VQ] \to  \CK[\CGQ(\CK[VQ])].
$$
One checks that
\[\begin{tikzcd} \CK[\CGQ(\CK[VQ])]_0\ar[dr, "{\widetilde{i_0}}"] \\ \Sigma_Q\ar[r,"{i_0}"']\ar[u,"{(\varepsilon_{VQ})_0^{-1}}"] & \CK[VQ]\rar{\eta^{s,t}_{\CK[VQ]}}\ar[rr, bend right=20, "s"] & \CK[\CGQ(\CK[VQ])]\rar{\pi'_0} & \CK[\CGQ(\CK[VQ])]_0\rar{(\varepsilon_{VQ})_0} & \Sigma_Q \end{tikzcd}\]
commutes and hence the composition of the horizontal maps $\Sigma_Q \to \Sigma_Q$ is the identity map (because $s$ is a splitting of $\widetilde{i_0}$). Therefore,
\begin{align*}
    (\CK[\varepsilon_{VQ}]\circ \eta^{s,t}_{\CK[VQ]} - \tn{id}_{\CK[VQ]})|_{\Sigma_Q}&=\pi_0^{\CK[VQ]}(\CK[\varepsilon_{VQ}]\circ \eta^{s,t}_{\CK[VQ]} - \tn{id}_{\CK[VQ]})i_0\\
    &=(\varepsilon_{VQ})_0\pi'_0 \eta^{s,t}_{\CK[VQ]}i_0 - \pi_0^{\CK[VQ]} i_0\\
    &=\tn{id}_{\Sigma_Q}-\tn{id}_{\Sigma_Q}=0.
\end{align*}
Similarly we get $(\CK[\varepsilon_{VQ}]\circ \eta^{s,t}_{\CK[VQ]} - \tn{id}_{\CK[VQ]})(\CK[VQ]_1)\subseteq \Sigma_Q$ and the second equation follows.
\end{proof}

\subsection{Consequences and examples}

\begin{remark}
Recall that a subcoalgebra $H$ of a path coalgebra $k[VQ]$ is \emph{admissible} if $H$ contains  $\CK[VQ]_1$ \cite[Definition 4.7]{Woodcock}.
If $C$ is a pointed coalgebra, any representative in $\cat{PCog}$ of the unit map $\eta_C : C\to \wCK[\wCGQ(C)]$ of Adjunction \ref{Theorem Adjunction Level 1} realizes $C$ as an admissible subcoalgebra of its path coalgebra.  For further discussion about the uniqueness of such a presentation of $C$, see Section \ref{Sec.uniqueness of presentations}.
\end{remark}

\begin{remark}
Using Lemma \ref{Lemma injective mod Cn}
and the Heyneman-Radford Theorem (see e.g.\ {\cite[Theorem 5.3.1]{Montgomery}}) one shows that the Adjunction \ref{Theorem Adjunction Level 1} restricts to an adjunction between the wide subcategories of $k\dash\cat{Quiv}$ and $\cat{PCog}_{\sim}$ with morphisms the monomorphisms.
\end{remark}

\begin{remark} A coalgebra $C$ is said to be \textit{hereditary}
\cite{NTZ} if homomorphic images of injective comodules are injective.  It is known (e.g.\ \cite[Theorem 1]{Chin}) that $C$ is hereditary if, and only if, $C$ is isomorphic to $\wCK[\wCGQ(C)]$.  Therefore, if we restrict $\cat{PCog}_{\sim}$ to the full subcategory of hereditary coalgebras, the Adjunction \ref{Theorem Adjunction Level 1} yields an adjoint equivalence of categories. 
\end{remark}

\begin{remark} 
Each component of the unit is a monomorphism and each component of the counit is an isomorphism.
It follows by abstract nonsense (cf.\ \cite[Theorem IV.3.1]{MacLane}) that the functor $\wCGQ(-)$ is faithful and that $\wCK[-]$ is fully faithful.
\end{remark}

\begin{remark} \label{RemPhi}
The unit and counit of  Adjunction \ref{Theorem Adjunction Level 1} define bijections
\[\Psi=\Psi_{C,VQ}:\Hom_{\cat{PCog}_{\sim}}(C,\wCK[VQ])\to\Hom_{k\dash\cat{Quiv}}(\wCGQ(C),VQ),\]
with $\Psi([\rho])=\varepsilon_{VQ}\wCGQ([\rho])$ and $\Psi^{-1}(\varphi)=\wCK[\varphi]\eta_C$ (cf.\ \cite[Theorem IV.1.2]{MacLane}).
\end{remark}

\begin{remark}
Adjunction \ref{Theorem Adjunction Level 1} may be compared with a similar, but different adjunction due to Radford \cite{Rad82}.  On the ``combinatorial side'', Radford's category $(\mathscr{SV})_k$ is equivalent to $k\dash\cat{Quiv}$, but the ``algebraic'' categories $(\mathscr{C}_p\mathscr{E})_k$ and $\cat{PCog}_{\sim}$ are non-equivalent.  While the left adjoint functor $\wCGQ(-)$ above corresponds to the Gabriel $k$-quiver construction, the left adjoint functor in \cite{Rad82} is better thought of as giving a Peirce decomposition of a coalgebra (cf.\ \cite[Section 2.1]{HGK10} for Peirce decompositions of algebras or \cite{CG02} for a related approach to coalgebras using idempotents).  In order to see that the functors are fundamentally different, one may observe that the image of the unit map of Radford's adjunction applied to the coalgebra $k[VQ]$ of Example \ref{example triangle} does not yield an admissible subcoalgebra.  In particular, the results of Section \ref{Sec.uniqueness of presentations} do not follow formally from the adjunction in \cite{Rad82}.
\end{remark}

\begin{example}
The adjunction above allows us to describe the automorphisms of the path coalgebra $\wCK[VQ]$ in terms of automorphisms of the corresponding $k$-quiver $VQ$.
In the following examples we suppress notation: an arrow that should be labelled with a vector space of dimension $1$ 
    will be left unlabelled. 
\begin{enumerate}
    \item Consider the following $k$-quivers:
    \[A_{\infty}: \begin{tikzcd}
    \circ \arrow[r] & \circ \arrow[r] & \circ \arrow[r] & \cdots
    \end{tikzcd}\]
    \[\prescript{}{\infty}{A}_{\infty}: \begin{tikzcd}
    \cdots \arrow[r] & \circ \arrow[r]  & \circ \arrow[r] & \cdots
    \end{tikzcd}\]
    An automorphism of $\wCK[A_{\infty}]$ must fix the vertices.  Indeed, $$\tn{Aut}_{\cat{PCog}_{\sim}}(\wCK[A_{\infty}]) \iso \prod_{n\in \N}k^{\times}$$
    where $k^{\times}$ is the group of units of $k$ and the product is indexed by the arrow spaces.
    
    An automorphism of $\wCK[\prescript{}{\infty}A_{\infty}]$ can shift the vertices.  Indeed  $\tn{Aut}_{\cat{PCog}_{\sim}}(\wCK[\prescript{}{\infty}A_{\infty}])$ is isomorphic to a semidirect product 
    $$\left(\prod_{n\in \Z}k^{\times}\right) \rtimes \Z.$$
    Note that the automorphism groups of both these algebras in $\cat{PCog}$ are quite a bit larger, because for example in $\cat{PCog}_{\sim}$ we don't distinguish between the identity and the automorphism that sends the element $x$ of the arrow space $e\to f$ to $x + (f-e)$.

    \item If $VQ$ is the $k$-quiver with one vertex and a loop indexed by the vector space $V$, then we have $\tn{Aut}_{\cat{PCog}_{\sim}}(\wCK[VQ])\iso\tn{GL}(V) = \tn{Aut}_k(V)$.  The $k$-quivers of this form are the only connected $k$-quivers for which the corresponding automorphism groups in $\cat{PCog}$ and in $\cat{PCog}_{\sim}$ are equal.
    
    \item For the \emph{Kronecker} $k$-quiver \[K_V: \begin{tikzcd}
    \circ \rar{V} & \circ 
    \end{tikzcd}\]
    with $V$ a $k$-vector space, we also have that \[\tn{Aut}_{\cat{PCog}_{\sim}}(\wCK[K_V])\iso \tn{GL}(V).\]

\end{enumerate}
\end{example}

\section{Pseudocompact Algebras}\label{Section PC algebras and adjunctions}

\subsection{Preliminaries and categories}

Throughout this section $k$ remains a field, now treated as a discrete topological ring.  A \emph{pseudocompact algebra} is an associative, unital, Hausdorff topological $k$-algebra $A$ possessing a basis of neighborhoods of 0 consisting of (open) ideals $I$ having cofinite dimension in $A$ that intersect in 0 and such that $A \iso \invlim_I A/I$. Denote by $\cat{Alg}$ the category of pseudocompact algebras and continuous homomorphisms

Let $A,B$ be pseudocompact algebras.  A \emph{pseudocompact $A$-$B$-bimodule} is a topological $A$-$B$-bimodule $U$ possessing a basis of 0 consisting of open subbimodules $V$ of finite codimension that intersect in 0 and such that $U \iso \invlim_V U/V$. By $J(A)$ denote the Jacobson radical of $A$; i.e. the intersection of the maximal closed left ideals of $A$ (see \cite[Section 1, p.444]{Brumer} for alternative characterizations of $J(A)$).

We must be a little careful when defining the higher radicals of $A$.  Given a pseudocompact $A$-module $M$, define $\tn{Rad}(M)$ to be the intersection of the maximal closed $A$-submodules of $M$.  For $n\geqslant 1$, we define $J^{n+1}(A) = \tn{Rad}(J^n(A))$.  There seems no reason to suppose that the abstract submodule of $A$ generated by $J(A)\cdot J(A)$ be closed in $A$, but we have that
$$J^2(A) = \overline{J(A)\cdot J(A)}.$$
This can be seen taking limits, observing that $\frac{J(A)J(A) + I}{I} = J(A/I)J(A/I)$ for every open ideal $I$ of $A$ and using that the equality $J(B)\cdot J(B) = \tn{Rad}(J(B))$ holds for finite dimensional $B$. 

Recall the duality between pseudocompact algebras and coalgebras, formalized by Simson in \cite{Simson}.  Given a topological vector space $V$, let $V^* = \Hom_k(V,k)$ denote the set of continuous functionals on $V$.  If $C$ is a coalgebra (always treated as discrete), then $C^*$ inherits naturally the structure of a pseudocompact algebra (the ``dual algebra of $C$''), while if $A$ is a pseudocompact algebra, then $A^*$ inherits naturally the structure of a coalgebra (the ``dual coalgebra of $A$'').
In this way we obtain a duality of categories (see \cite[Theorem 3.6]{Simson})
\begin{equation*} 
\begin{tikzcd}
\cat{Cog}\ar[r, shift left, "(-)^*"]& \cat{Alg}.\ar[l,shift left, "(-)^{*}"] 
\end{tikzcd}
\end{equation*}
Similarly, the functors $(-)^*$ induce a duality between the category of pseudocompact $A$-modules and the category of comodules over $A^{*}$ (or, equivalently, $C$-comodules and pseudocompact $C^*$-modules), see \cite[Theorem 4.3]{Simson1} for details.

A pseudocompact algebra $A$ is \emph{pointed} if every quotient of $A$ by a closed maximal left ideal is one dimensional, or equivalently if $A/J(A)$ is isomorphic as a topological algebra to a product of copies of $k$.   Denote by $\cat{PAlg}$ the full subcategory of $\cat{Alg}$ consisting of all pointed pseudocompact algebras.   By a (topologically) semisimple pseudocompact algebra $A$ we mean an algebra such that $J(A)=0$.  This condition is equivalent to saying that $A$ is isomorphic to a direct product of simple finite dimensional algebras (properly interpreted, the proof of \cite[Theorem 16]{Kaplansky47} goes through for pseudocompact algebras.  Alternatively, the result is a special case of \cite[Theorem 2.10]{Iovanov2}). 
The duality $(-)^*$ between coalgebras and pseudocompact algebras restricts to a duality between the full subcategories of cosemisimple pointed coalgebras and semisimple pointed pseudocompact algebras, respectively.

Given a  pseudocompact algebra\! $A$ and a pseudocompact $A$-bimodule $U$\!, denote by $T\db{A,U}$ the \textit{complete tensor algebra} 
$T\db{A,U} \coloneqq \prod_{n=0}^{\infty}U^{\ctens_n}$,
with $U^{\ctens_0} \coloneqq A$ and $U^{\ctens_n}\coloneqq (U^{\ctens_{n-1}}) \ctens_A U$, where $\ctens_A$ denotes the complete tensor product over $A$ (see \cite[Section 7.5]{Gabriel} for details).  The universal property for the complete tensor algebra is given in \cite[Lemma 2.11]{IM}.  One may check that if $M$ is a left $C$-comodule and $W$ is a right $C$-comodule, then 
$$W\Box_C M\cong (W^*\ctens_{C^*} M^*)^*.$$
Hence the pseudocompact algebra dual to $\tn{Cot}_C(M)$ is $T\db{C^*,M^*}$.

The \emph{complete path algebra} $k\db{Q}$ of the quiver $Q$ is the set of sequences $(\lambda_w)_w$ indexed by (oriented) paths in $Q$, with multiplication defined by \[(\lambda_w)_w*(\kappa_v)_v=\Bigl(\sum_{u=wv}\lambda_w\kappa_v\Bigr)_u\]
(see \cite[Section 1]{Iovanov}). It follows that $k\db{Q}$ is a pseudocompact algebra.  It is a standard fact that $(\CK Q)^{*}$ is isomorphic to the complete path algebra $k\db{Q}$ (e.g. \cite[Proposition 8.1]{Simson1}).

\medskip

Let $\alpha, \beta: A\to B$  be two homomorphisms in $\cat{PAlg}$. We write $\alpha\sim \beta$  if
\begin{align*}
&(\alpha-\beta)(A)\subseteq J(B), \quad \mbox{and}\\
&(\alpha-\beta)(J(A))\subseteq J^2(B).
\end{align*}
As with coalgebras, one easily checks that $\sim$ defines a congruence relation on $\cat{PAlg}$.  We denote by $\cat{PAlg}_{\sim}$ the corresponding quotient category.  The relation $\sim$ for pseudocompact algebras is dual to the relation $\sim$ for coalgebras in the following sense:

\begin{prop}\label{sim_dual}
Let $\rho,\gamma:C\to D$ be two homomorphisms in $\cat{PCog}$. Then $\rho\sim\gamma$ if, and only if, $\rho^*\sim\gamma^*$ in $\cat{PAlg}$. 
\end{prop}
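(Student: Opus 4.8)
The statement to prove is Proposition \ref{sim_dual}: for $\rho,\gamma\colon C\to D$ in $\cat{PCog}$, we have $\rho\sim\gamma$ if and only if $\rho^*\sim\gamma^*$ in $\cat{PAlg}$. The plan is to exploit the duality $(-)^*$ of \cite[Theorem 3.6]{Simson} together with a precise dictionary between the coradical filtration of a coalgebra $C$ and the radical filtration of its dual algebra $C^*$. The key translation facts I would first isolate are: (i) $(C_n)^{\perp} = J^{n+1}(C^*)$ for all $n\geqslant 0$, where $(-)^{\perp}$ denotes the annihilator in the dual; equivalently, under the natural identification $(C/C_n)^* \iso (C_n)^{\perp}$, the subspace $C_n$ of $C$ is the orthogonal complement of $J^{n+1}(C^*)$; and (ii) a linear map $f\colon C\to D$ has $f(C_n)\subseteq D_m$ if and only if its transpose $f^*\colon D^*\to C^*$ satisfies $f^*(J^{m+1}(D^*))\subseteq J^{n+1}(C^*)$ (this is just the elementary fact that $f(X)\subseteq Y \iff f^*(Y^{\perp})\subseteq X^{\perp}$, applied to $X=C_n$, $Y=D_m$, combined with (i)).

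With these in hand the argument is essentially formal. First I would record that $(\rho-\gamma)^* = \rho^*-\gamma^*$, since $(-)^*$ is additive (it is a linear functor on Hom-spaces); note $\rho-\gamma$ and $\rho^*-\gamma^*$ are merely linear maps, not (co)algebra maps, but that is irrelevant for the annihilator computations. Now unwind the definitions: $\rho\sim\gamma$ means $(\rho-\gamma)(C_0)=0$ and $(\rho-\gamma)(C_1)\subseteq D_0$. The first condition $(\rho-\gamma)(C_0)=0$ is equivalent, by taking transposes, to $(\rho^*-\gamma^*)(D^*)\subseteq (C_0)^{\perp} = J(C^*)$ (using (i) with $n=0$, since $J^{1}(C^*)=J(C^*)$); here one uses that $(\rho-\gamma)(C_0)=0 \iff (\rho-\gamma)(C)\subseteq C$ trivially is not what we want — rather $(\rho-\gamma)(C_0)=0 \iff \mathrm{im}(\rho^*-\gamma^*)$ kills $C_0$, i.e.\ lands in $(C_0)^\perp$. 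Wait: more carefully, $(\rho-\gamma)(C_0)=0$ says the composite $C_0\hookrightarrow C\xrightarrow{\rho-\gamma} D$ is zero, which dualizes to: $D^*\xrightarrow{\rho^*-\gamma^*} C^* \twoheadrightarrow (C_0)^*$ is zero, i.e.\ $(\rho^*-\gamma^*)(D^*)\subseteq \ker(C^*\to (C_0)^*) = (C_0)^{\perp}=J(C^*)$. That is exactly the first defining condition of $\rho^*\sim\gamma^*$. Similarly $(\rho-\gamma)(C_1)\subseteq D_0$ dualizes, via fact (ii) with the refinement that one must pass to $D_0 = (J(D^*))^\perp$ and $C_1 = (J^2(C^*))^\perp$, to $(\rho^*-\gamma^*)(J(D^*))\subseteq J^2(C^*)$, which is the second defining condition. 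Each of these equivalences is reversible because $(-)^{\perp\perp}=\mathrm{id}$ on closed subspaces in this duality (both $C_n$ and $J^{n+1}(C^*)$ are of the right form — $C_n$ is finite-codimensional-ish in the sense needed, or more precisely the duality $(-)^*$ between $\cat{Cog}$ and $\cat{Alg}$ is a genuine contravariant equivalence, so applying $(-)^*$ twice returns to the start and all implications become biconditionals automatically).

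The main obstacle — and the step deserving the most care — is establishing fact (i), the identification $(C_n)^{\perp}=J^{n+1}(C^*)$. For $n=0$ this is the standard statement that the coradical of $C$ is dual to $C^*/J(C^*)$ (equivalently $C_0^\perp = J(C^*)$), which follows from the duality between cosemisimple coalgebras and semisimple pseudocompact algebras already recalled in the excerpt, together with the fact that $C/C_0$ being the "largest cosemisimple quotient" dualizes to $J(C^*)$ being the radical. For general $n$ one proceeds by induction using the recursive description $C_n = \varDelta^{-1}(C\otimes C_{n-1} + C_0\otimes C)$ dual to $J^{n+1}(C^*) = \overline{J(C^*)\cdot J^n(C^*)}$ (the closure being exactly the point flagged in the excerpt's discussion of higher radicals), together with the compatibility $W\Box_C M \iso (W^*\ctens_{C^*}M^*)^*$ recorded just before the complete path algebra is introduced. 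Alternatively, and more cleanly, one can reduce to the finite-dimensional case: write $C=\dirlim C^{(\lambda)}$ as a directed union of finite-dimensional subcoalgebras, so $C^*=\invlim (C^{(\lambda)})^*$, observe $C_n = \dirlim C^{(\lambda)}_n$ and $J^{n+1}(C^*) = \invlim J^{n+1}((C^{(\lambda)})^*)$, and invoke the classical finite-dimensional duality $C^{(\lambda)}_n{}^\perp = J^{n+1}((C^{(\lambda)})^*)$ (see e.g.\ \cite[Section 5.2]{Montgomery} combined with standard finite-dimensional algebra theory), then take limits. Once (i) is in place, fact (ii) and the rest of the argument are routine linear algebra/duality bookkeeping, so I would present (i) as a short lemma (or cite it) and then give the two-line dualization for each of the two defining conditions of $\sim$.
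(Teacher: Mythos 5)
Your proposal is correct and follows essentially the same route as the paper: the paper reads the two defining conditions of $\sim$ on each side as the vanishing of certain composites ($C_0\to C\to D$ and $C_1/C_0\to C/C_0\to D/D_0$ versus $A\to B\to B/J(B)$ and $J(A)\to J(B)\to J(B)/J^2(B)$) and declares the equivalence a formal consequence of the duality $(-)^*$, which is exactly the annihilator dictionary $(C_n)^{\perp}=J^{n+1}(C^*)$ that you spell out. Your version merely makes explicit the standard facts the paper leaves implicit, so no further comment is needed.
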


\begin{proof}
If $\rho', \gamma':A\to B$ are homomorphisms of pseudocompact algebras, the condition $\rho'\sim \gamma'$ can be interpreted as saying that the compositions
$$A\xrightarrow{\rho'-\gamma'}B \to B/J(B)\,,\quad J(A)\xrightarrow{\rho'-\gamma'}J(B) \to J(B)/J^2(B)$$
are the zero map, while if $\rho, \gamma:C\to D$ are homomorphisms of coalgebras, the condition $\rho\sim \gamma$ can be interpreted as saying that the compositions
$$C_0\to C \xrightarrow{\rho-\gamma} D\,,\quad C_1/C_0\to C/C_0 \xrightarrow{\rho-\gamma} D/D_0$$
are the zero map.  The proposition is thus a formal consequence of duality.
\end{proof}

\begin{prop}\label{prop duality with tilde}
The duality functors $(-)^*$ between $\cat{PCog}$ and $\cat{PAlg}$ induce a duality between the categories $\cat{PCog}_{\sim}$ and $\cat{PAlg}_{\sim}$
\end{prop}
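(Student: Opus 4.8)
The plan is to exploit Proposition \ref{sim_dual}, which asserts precisely that the duality functors $(-)^*$ both \emph{preserve} and \emph{reflect} the congruence $\sim$. Granting this, the remaining work is formal: one checks that $(-)^*$ descends to a functor between the quotient categories, and that the descended functor is full, faithful and essentially surjective, hence a duality.

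First I would handle the descent. Write $\Pi_{\cat{PCog}}:\cat{PCog}\to\cat{PCog}_{\sim}$ and $\Pi_{\cat{PAlg}}:\cat{PAlg}\to\cat{PAlg}_{\sim}$ for the projection functors. The composite contravariant functor $\Pi_{\cat{PAlg}}\circ(-)^*:\cat{PCog}\to\cat{PAlg}_{\sim}$ sends $\sim$-equivalent morphisms to equal morphisms, by the ``only if'' direction of Proposition \ref{sim_dual}. Hence, by the universal property of the quotient category, it factors uniquely as $F\circ\Pi_{\cat{PCog}}$ for a unique contravariant functor $F:\cat{PCog}_{\sim}\to\cat{PAlg}_{\sim}$; explicitly $F$ is the identity on objects and $F([\rho])=[\rho^*]$.

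Next I would verify that $F$ is a duality. For objects $C,D$, the map on hom-sets $\Hom_{\cat{PCog}_{\sim}}(C,D)\to\Hom_{\cat{PAlg}_{\sim}}(D^*,C^*)$ induced by $F$ is the map $[\rho]\mapsto[\rho^*]$ descending from the bijection $\Hom_{\cat{PCog}}(C,D)\to\Hom_{\cat{PAlg}}(D^*,C^*)$ of the duality; since by Proposition \ref{sim_dual} this bijection carries $\sim$-classes bijectively onto $\sim$-classes (the ``if'' direction giving injectivity on classes, surjectivity of $(-)^*$ on hom-sets giving surjectivity), the induced map on quotients is again a bijection. Thus $F$ is fully faithful. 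For essential surjectivity, note that $\cat{PAlg}_{\sim}$ has the same objects as $\cat{PAlg}$; every pointed pseudocompact algebra is isomorphic in $\cat{PAlg}$ to $C^*$ for some pointed coalgebra $C$, and applying $\Pi_{\cat{PAlg}}$ turns this into an isomorphism in $\cat{PAlg}_{\sim}$. Hence $F$ is essentially surjective, and therefore a duality; equivalently, one may name the quasi-inverse $G:\cat{PAlg}_{\sim}\to\cat{PCog}_{\sim}$ (also the identity on objects, $[\alpha]\mapsto[\alpha^*]$), obtained symmetrically.

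I do not anticipate a genuine obstacle: all the mathematical content sits in Proposition \ref{sim_dual}, and the rest is the standard mechanics of quotient categories. The only points requiring a little care are making sure the \emph{full} strength of Proposition \ref{sim_dual} is used --- the ``only if'' direction to make $F$ well defined, the ``if'' direction to keep $F$ faithful --- and, if one prefers the quasi-inverse formulation over the fully-faithful-plus-essentially-surjective one, checking that the natural isomorphism $C\to C^{**}$ underlying the original duality descends to $\cat{PCog}_{\sim}$, which follows by applying $\Pi_{\cat{PCog}}$ to its naturality squares and noting that functors preserve isomorphisms.
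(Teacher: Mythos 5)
Your proposal is correct and follows the same route as the paper, which simply records the result as ``Immediate from Proposition \ref{sim_dual}''; you have merely spelled out the routine quotient-category mechanics that the authors leave implicit. Nothing to add.
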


\begin{proof}
Immediate from Proposition \ref{sim_dual}. 
\end{proof}

One proves as in \cite[Lemma 3.8]{IM} (or by dualizing Proposition \ref{tiln}) that given $\alpha,\beta:A\to B$ in $\cat{PAlg}$, if $\alpha\sim\beta$ then $(\alpha - \beta)(J^n(A))\subseteq J^{n+1}(B)$ for every $n\geqslant 0$.

\subsection{Contravariant adjoint functors}

We obtain a new, contravariant adjunction immediately from the adjunction of Theorem \ref{Theorem Adjunction Level 1} and the duality of categories of Proposition \ref{prop duality with tilde}:

Define the contravariant functors
\begin{align*}
\widetilde{k}\db{-} : k\dash\cat{Quiv}  & \to \cat{PAlg}_{\sim} \\
               VQ\,\, & \mapsto \wCK[VQ]^{*}
\end{align*}
and 
\begin{align*}
\widetilde{\tn{GQ}}\dbc{-} : \cat{PAlg}_{\sim}  & \to k\dash\cat{Quiv} \\
               A\,\,\,\,\, & \mapsto \widetilde{\tn{GQ}}(A^{*}).
\end{align*}
with the obvious definition for morphisms.  Recall that the pair of contravariant functors $F : \mathcal{C} \leftrightarrow \mathcal{D} : G$ are \emph{adjoint on the left} if for each pair of objects $c\in \mathcal{C}$ and $d\in\mathcal{D}$ we have a natural isomorphism
$$\tn{Hom}_{\mathcal{D}}(Fc , d) \to \tn{Hom}_{\mathcal{C}}(Gd, c).$$
We have

\begin{theorem}\label{theorem pc left adjunction}
The functors $\widetilde{\tn{GQ}}\dbc{-}, \widetilde{k}\db{-}$ are adjoint on the left.
\end{theorem}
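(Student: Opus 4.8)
The plan is to obtain this adjunction \emph{formally}, with no new computation, by combining the covariant adjunction of Theorem \ref{Theorem Adjunction Level 1} with the duality of Proposition \ref{prop duality with tilde}. Recall the general principle: if $F : \mathcal{C} \to \mathcal{C}'$ is left adjoint to $G : \mathcal{C}' \to \mathcal{C}$ (covariant), and $D : \mathcal{C}' \to \mathcal{D}$ is a (contravariant) duality with quasi-inverse $D^{-1} : \mathcal{D} \to \mathcal{C}'$, then the contravariant functors $D \circ F : \mathcal{C} \to \mathcal{D}$ and $G \circ D^{-1} : \mathcal{D} \to \mathcal{C}$ are adjoint on the left. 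Indeed, for $c \in \mathcal{C}$ and $d \in \mathcal{D}$ one has the chain of natural bijections
\[
\tn{Hom}_{\mathcal{D}}(DFc, d) \;\cong\; \tn{Hom}_{\mathcal{C}'}(D^{-1}d, Fc) \;\cong\; \tn{Hom}_{\mathcal{C}}(GD^{-1}d, c),
\]
where the first uses that $D$ (equivalently $D^{-1}$) is a duality and the second is the adjunction isomorphism $\Psi$ from Remark \ref{RemPhi}; naturality in both variables is inherited from the naturality of these two isomorphisms.

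Concretely, I would take $\mathcal{C} = k\dash\cat{Quiv}$, $\mathcal{C}' = \cat{PCog}_{\sim}$, $\mathcal{D} = \cat{PAlg}_{\sim}$, with $F = \wCGQ(-)^{\mathrm{op}}$ played instead by the left adjoint $\wCGQ(-) : \cat{PCog}_{\sim} \to k\dash\cat{Quiv}$ and $G = \wCK[-] : k\dash\cat{Quiv} \to \cat{PCog}_{\sim}$ its right adjoint (Theorem \ref{Theorem Adjunction Level 1}), and $D = (-)^*$ the duality of Proposition \ref{prop duality with tilde}. Unwinding the definitions: $\widetilde{k}\db{-} = (-)^* \circ \wCK[-]$ sends $VQ \mapsto \wCK[VQ]^*$, and $\widetilde{\tn{GQ}}\dbc{-} = \wCGQ(-) \circ ((-)^*)$ sends $A \mapsto \wCGQ(A^*)$ — exactly the two functors in the statement. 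So the desired natural isomorphism
\[
\tn{Hom}_{\cat{PAlg}_{\sim}}\bigl(\widetilde{k}\db{VQ}, A\bigr) \;\to\; \tn{Hom}_{k\dash\cat{Quiv}}\bigl(\widetilde{\tn{GQ}}\dbc{A}, VQ\bigr)
\]
is assembled as: apply the duality $(-)^*$ to identify $\tn{Hom}_{\cat{PAlg}_{\sim}}(\wCK[VQ]^*, A) \cong \tn{Hom}_{\cat{PCog}_{\sim}}(A^*, \wCK[VQ])$, then apply $\Psi_{A^*, VQ}$ of Remark \ref{RemPhi} to reach $\tn{Hom}_{k\dash\cat{Quiv}}(\wCGQ(A^*), VQ)$.

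There is essentially no obstacle here, only bookkeeping: one must check that the composite bijection is natural in the $k$-quiver variable $VQ$ and in the pseudocompact algebra variable $A$. Both pieces are natural — the hom-set bijection induced by a duality is natural in both entries, and $\Psi$ is natural by the general theory of adjunctions (\cite[Theorem IV.1.2]{MacLane}) — so the composite is. The only point requiring a moment's care is variance tracking: since $(-)^*$ is contravariant, a morphism $VQ \to VR$ of $k$-quivers induces $\wCK[VR]^* \to \wCK[VQ]^*$ in $\cat{PAlg}_{\sim}$, matching the contravariance of $\widetilde{k}\db{-}$; and likewise a morphism $A \to B$ in $\cat{PAlg}_{\sim}$ dualizes to $B^* \to A^*$ and then via $\wCGQ$ to $\widetilde{\tn{GQ}}\dbc{B} \to \widetilde{\tn{GQ}}\dbc{A}$. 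With the variances lined up, both naturality squares commute because each constituent square does. Hence the two contravariant functors are adjoint on the left, and the proof consists of the single displayed chain of natural isomorphisms together with this verification.
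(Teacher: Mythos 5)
Your proposal is correct and follows essentially the same route as the paper: the paper's proof is exactly the chain $\tn{Hom}_{k\dash\cat{Quiv}}(\widetilde{\tn{GQ}}(A^{*}) , VQ) \iso \tn{Hom}_{\cat{PCog}_{\sim}}(A^* , \wCK[VQ]) \iso \tn{Hom}_{\cat{PAlg}_{\sim}}(\wCK[VQ]^{*} , A^{**}) \iso \tn{Hom}_{\cat{PAlg}_{\sim}}(\widetilde{k}\db{VQ} , A)$, i.e.\ the Adjunction \ref{Theorem Adjunction Level 1} composed with the duality of Proposition \ref{prop duality with tilde}, which the paper likewise describes as ``completely formal''. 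Your additional remarks on naturality and variance are correct bookkeeping that the paper leaves implicit.
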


\begin{proof}
This is completely formal.  Given $A\in \cat{PAlg}$ and $VQ\in k\dash\cat{Quiv}$ we have
\begin{align*}
    \tn{Hom}_{k\dash\cat{Quiv}}(\widetilde{\tn{GQ}}\dbc{A} , VQ) & = \tn{Hom}_{k\dash\cat{Quiv}}(\widetilde{\tn{GQ}}(A^{*}) , VQ) \\
    & \iso \tn{Hom}_{\cat{PCog}_{\sim}}(A^* , \wCK[VQ]) \\
    & \iso \tn{Hom}_{\cat{PAlg}_{\sim}}(\wCK[VQ]^{*} , A^{**}) \\
    & \iso \tn{Hom}_{\cat{PAlg}_{\sim}}(\widetilde{k}\db{VQ} , A),
\end{align*}
as required.
\end{proof}

\subsection{Covariant adjoint functors}

In \cite{IM}, the first two authors of this article define a pair of covariant adjoint functors between a certain category of finite $k$-quivers and a category whose objects are pseudocompact pointed algebras $A$ such that $A/J^2(A)$ is finite dimensional and whose morphisms are (congruence classes of) those algebra homomorphisms $\alpha:A\to B$ such that the induced map $A/J(A) \to B/J(B)$ is surjective.  The adjunctions \ref{Theorem Adjunction Level 1} and \ref{theorem pc left adjunction} 
are far more general, because there are no finiteness assumptions and there are no conditions on the algebra homomorphisms.
We show in this section that if one is willing to leave behind the notion of quiver, one can in fact extend the adjunction of covariant functors \cite[Theorem 5.2]{IM} to this same level of generality.

\medskip

The category $k\dash\cat{Quiv}$ defined in Section $\ref{subsection VQuiv def}$ is equivalent to the ``category of pairs'' $\cat{ParCog}$, whose definition is as follows: objects are pairs $(\Sigma, V)$, where $\Sigma$ is a pointed cosemisimple coalgebra and $V$ is a $\Sigma$-bicomodule.  A morphism
$$(\Sigma, V)\to (\Sigma', V')$$
is a pair $(\varphi_0, \varphi_1)$ consisting of a coalgebra homomorphism $\varphi_0 : \Sigma\to \Sigma'$ and a $\Sigma'$-bicomodule homomorphism $\varphi_1:V\to V'$, with $V$ treated as a $\Sigma'$-bicomodule via $\varphi_0$. The functor $k\dash\cat{Quiv}\to \cat{ParCog}$ sends the $k$-quiver $VQ$ to the pair $(\Sigma_Q,V_Q)$, where
$$\Sigma_Q = \bigoplus_{g\in VQ_0}k\,,\quad V_Q = \bigoplus_{g,h\in VQ_0}VQ_{g,h}.$$
The bicomodule structure is as in Section \ref{subsection Cofunctors definitions}.  The action on morphisms is obvious.  In the other direction, we define the functor $\cat{ParCog}\to k\dash\cat{Quiv}$ by sending $(\Sigma, V)$ to the $k$-quiver $VQ$ having vertices $VQ_0 = \textnormal{G}(\Sigma)$ and for each pair $g,h\in \textnormal{G}(\Sigma)$, 
$$VQ_{g,h} = \{v\in V\,|\, \mu(v) = h\otimes v\hbox{ and } \nu(v) = v\otimes g\}.$$
The action on morphisms is again obvious.  Observing that $V = \bigoplus VQ_{g,h}$ because $\textnormal{G}(\Sigma)$ is a basis for $\Sigma$, one checks that these functors give the affirmed equivalence of categories.

Dually, define the category $\cat{ParAlg}$ to be the category whose objects are pairs $(A,U)$ with $A$ a pointed topologically semisimple pseudocompact algebra and $U$ a pseudocompact $A$-bimodule.  A morphism $(A,U)\to (A',U')$ is a pair $(\varphi_0,\varphi_1)$ consisting of a continuous algebra homomorphism $\varphi_0:A\to A'$ and a continuous $A$-bimodule homomorphism $\varphi_1: U\to U'$, with $U'$ treated as an $A$-bimodule via $\varphi_0$.  The categories $\cat{ParCog}$ and $\cat{ParAlg}$ are clearly dual via $(\Sigma,V)\mapsto (\Sigma^*, V^*)$.  By composing, the category $k\dash\cat{Quiv}$ is dual to the category $\cat{ParAlg}$.

One could alternatively dualize the category of $k$-quivers directly, but this is awkward and one loses combinatorial intuition anyway, because the dual of a map of (normal) $k$-quivers that is not injective on vertices will not be a map of directed graphs between the dual quivers (vertices do not go to vertices).

Consider the covariant functor
$$T\db{-} : \cat{ParAlg}\to \cat{PAlg}_{\sim}$$
given on objects by $T\db{(A,U)}:= T\db{A,U}$ and on morphisms via the universal property of the complete tensor algebra, and also the covariant functor
$$G\db{-} : \cat{PAlg}_{\sim} \to \cat{ParAlg}$$
given on objects by $A\mapsto (A/J(A) , J(A)/J^2(A))$ and on morphisms in the obvious way.  We have the following diagram of categories and functors, wherein arrows marked $E$ are equivalences and arrows marked $D$ are dualities:
\[\begin{tikzcd}
\cat{ParCog}\ar[r,leftrightarrow,"E"]\ar[d,leftrightarrow,swap,"D"]      & k\dash\cat{Quiv}\ar[r,bend left=8,"{\wCK[-]}"] &   \cat{PCog}_{\sim} \ar[l,bend left=8,"{\widetilde{\tn{GQ}}(-)}"]
\ar[d,leftrightarrow,"D"]   \\ 
\cat{ParAlg}\ar[rr,bend left=6,"{T\db{-}}"]       &             &   \cat{PAlg}_{\sim}\ar[ll,bend left=6,"{G\db{-}}"] 
\end{tikzcd}\]
\begin{prop}\label{prop nat isos}
In the above diagram, the composition 
$$\cat{ParAlg} \to \cat{ParCog} \to k\dash\cat{Quiv} \to \cat{PCog}_{\sim} \to \cat{PAlg}_{\sim}$$
is naturally isomorphic to $T\db{-}$, and the composition
$$\cat{PAlg}_{\sim} \to \cat{PCog}_{\sim} \to k\dash\cat{Quiv} \to \cat{ParCog} \to \cat{ParAlg}$$
is naturally isomorphic to $G\db{-}$.
\end{prop}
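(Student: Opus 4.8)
The plan is to trace the two stated compositions explicitly around the diagram, identify the natural isomorphisms component by component, and then verify naturality. For the first composition, I would start with an object $(A,U)\in\cat{ParAlg}$. Applying the duality $D$ sends it to $(A^*,U^*)\in\cat{ParCog}$; the equivalence $E$ sends this to the $k$-quiver $VQ$ with vertex set $\textnormal{G}(A^*)$ and arrow spaces $VQ_{g,h}=\{v\in U^*\mid \mu(v)=h\otimes v,\ \nu(v)=v\otimes g\}$; then $\wCK[-]$ produces $\Cot_{\Sigma_Q}(V_Q)$, where by construction $\Sigma_Q\cong A^*$ and $V_Q=\bigoplus VQ_{g,h}\cong U^*$ as bicomodules (the latter isomorphism being precisely the decomposition noted when verifying the equivalence $E$); finally $D$ sends $\Cot_{A^*}(U^*)$ to its dual algebra. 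Now I would invoke the identity, recorded in the excerpt just before the statement of the complete path algebra, that the pseudocompact algebra dual to $\tn{Cot}_C(M)$ is $T\db{C^*,M^*}$; applying it with $C=A^*$, $M=U^*$ gives $(\Cot_{A^*}(U^*))^*\cong T\db{A^{**},U^{**}}\cong T\db{A,U}=T\db{(A,U)}$, using that $(-)^{**}$ is naturally the identity on $\cat{PAlg}$ and on pseudocompact bimodules. This is the claimed object-level isomorphism, and I would then check it is natural: a morphism $(\varphi_0,\varphi_1):(A,U)\to(A',U')$ is carried around the diagram to $(\CK[\text{corresponding }k\text{-quiver map}])^*$, and one checks this agrees with $T\db{\varphi_0,\varphi_1}$ by appealing to the uniqueness clauses in the universal property of the cotensor coalgebra (Proposition \ref{cUniversalProperty}) and dually the universal property of the complete tensor algebra --- both sides are the unique map with the prescribed behaviour on the degree-$0$ and degree-$1$ parts.

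For the second composition, I would start with a pointed pseudocompact algebra $A\in\cat{PAlg}_{\sim}$. Duality $D$ gives $A^*\in\cat{PCog}_{\sim}$; the Gabriel $k$-quiver functor produces $\wCGQ(A^*)$ with vertices $\textnormal{G}(A^*)$ and arrow spaces $\qPri{g,h}(A^*)$; the equivalence $E$ identifies this with the pair $(\Sigma,V)$ where $\Sigma=k\,\textnormal{G}(A^*)=(A^*)_0$ and $V=(A^*)_1/(A^*)_0$; then $D$ sends $(\Sigma,V)$ to $(\Sigma^*,V^*)=((A^*)_0^*,\,((A^*)_1/(A^*)_0)^*)$. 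Here I would use the standard dictionary under Simson's duality: $(A^*)_0=(A/J(A))^*$ and, by dualizing the coradical filtration to the radical filtration, $(A^*)_1=(A/J^2(A))^*$, so that $(A^*)_1/(A^*)_0\cong (J(A)/J^2(A))^*$; taking duals again and using $(-)^{**}\cong\id$ yields $(\Sigma^*,V^*)\cong(A/J(A),\,J(A)/J^2(A))=G\db{A}$. Naturality follows because a homomorphism $\alpha:A\to B$ induces, under $D$, the map $\alpha^*:B^*\to A^*$, which by Proposition \ref{Coradical filtration and homomorphisms} respects the coradical filtrations; chasing this through the identifications above recovers exactly the pair of maps $(A/J(A)\to B/J(B),\ J(A)/J^2(A)\to J(B)/J^2(B))$ defining $G\db{\alpha}$, and the well-definedness on $\sim$-classes is guaranteed since all functors in sight descend to the quotient categories (using Proposition \ref{sim_dual} and Proposition \ref{prop duality with tilde}).

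The main obstacle I anticipate is not any single deep step but the bookkeeping around Simson's duality at the level of filtrations: one must be careful that the coradical filtration $\{C_n\}$ of $C=A^*$ dualizes to the $J$-adic filtration $\{J^n(A)\}$ of $A$, i.e. that $C_n^\perp = J^{n+1}(A)$ inside $A$, or equivalently that $(A/J^{n+1}(A))^* = (A^*)_n$. For $n=0$ this is the statement that the coradical of $A^*$ is $(A/J(A))^*$, which is standard; for general $n$ it should follow by dualizing the inductive characterization $C_n=\varDelta^{-1}(C\otimes C_{n-1}+C_0\otimes C)$ against the description $J^{n+1}(A)=\overline{J(A)J^n(A)}$ recorded in the excerpt, but making this precise --- in particular handling the topological closure on the algebra side and the fact that only the $n=0$ and $n=1$ layers are actually needed for the statement --- is where I would spend the most care. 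Everything else reduces to the two universal properties plus the fact, stated in the excerpt, that $(-)^{**}$ is naturally the identity, so the argument is "completely formal" in spirit once the filtration dictionary is nailed down.
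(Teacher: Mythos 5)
Your proposal is correct and does exactly what the paper's one-line proof (``Simple checks'') leaves implicit: trace each composition object-by-object through the dualities and the equivalence, use the recorded fact that $(\tn{Cot}_C(M))^*\cong T\db{C^*,M^*}$ for the first composition and the standard duality between the coradical filtration of $A^*$ and the radical filtration of $A$ (only the layers $n=0,1$ being needed) for the second, then verify naturality via the universal properties. This is the same approach as the paper, just written out in full.
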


\begin{proof}
Simple checks.
\end{proof}

\begin{theorem}\label{theorem PC covariant adjunction}
The functor $T\db{-}$ is left adjoint to the functor $G\db{-}$.
\end{theorem}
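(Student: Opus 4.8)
The plan is to deduce Theorem \ref{theorem PC covariant adjunction} from Theorem \ref{Theorem Adjunction Level 1} by transport along the equivalences and dualities displayed in the diagram preceding Proposition \ref{prop nat isos}, using that proposition to identify the transported composite functors with $T\db{-}$ and $G\db{-}$. Concretely, let $E_1 : \cat{ParCog} \leftrightarrow k\dash\cat{Quiv}$ be the equivalence, $D_1 : \cat{ParCog} \leftrightarrow \cat{ParAlg}$ and $D_2 : \cat{PCog}_{\sim} \leftrightarrow \cat{PAlg}_{\sim}$ the dualities. First I would record the elementary categorical fact that an adjunction is preserved by pre- and post-composition with equivalences, and is turned into an adjunction (with the roles of left and right swapped, and variance reversed) by pre- and post-composition with dualities: if $F \dashv G$ with $F : \mathcal{C}\to\mathcal{D}$, $G:\mathcal{D}\to\mathcal{C}$, and $D: \mathcal{C}\to\mathcal{C}'$, $D':\mathcal{D}\to\mathcal{D}'$ are dualities, then the covariant functors $D'\circ F\circ (D)^{-1}$ and $D\circ G\circ (D')^{-1}$ — read as functors between $\mathcal{C}'$ and $\mathcal{D}'$ — again form an adjoint pair, but now $D\circ G\circ (D')^{-1}$ is the left adjoint. (This is the same bookkeeping already used informally in the proof of Theorem \ref{theorem pc left adjunction}; here one does it twice, so the two variance reversals cancel and one stays in the covariant world.)

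Next I would assemble the Hom-set isomorphisms explicitly, exactly in the style of the proof of Theorem \ref{theorem pc left adjunction}. Given $(A,U)\in\cat{ParAlg}$ and $B\in\cat{PAlg}_{\sim}$, write $\Phi(A,U)$ for the image of $(A,U)$ under $\cat{ParAlg}\to\cat{ParCog}\to k\dash\cat{Quiv}$, i.e. $\Phi(A,U) = E_1(D_1^{-1}(A,U))$. Then
\begin{align*}
\Hom_{\cat{PAlg}_{\sim}}\bigl(T\db{(A,U)}, B\bigr)
&\iso \Hom_{\cat{PCog}_{\sim}}\bigl(B^{*}, (T\db{(A,U)})^{*}\bigr) \\
&\iso \Hom_{\cat{PCog}_{\sim}}\bigl(B^{*}, \wCK[\Phi(A,U)]\bigr) \\
&\iso \Hom_{k\dash\cat{Quiv}}\bigl(\wCGQ(B^{*}), \Phi(A,U)\bigr) \\
&\iso \Hom_{\cat{ParCog}}\bigl(E_1^{-1}\wCGQ(B^{*}), D_1^{-1}(A,U)\bigr) \\
&\iso \Hom_{\cat{ParAlg}}\bigl((A,U), D_1 E_1^{-1}\wCGQ(B^{*})\bigr).
\end{align*}
The first isomorphism is the duality $D_2$ (Proposition \ref{prop duality with tilde}); the second uses that $(T\db{(A,U)})^{*}\iso \wCK[\Phi(A,U)]$, which is precisely the content of the first half of Proposition \ref{prop nat isos} applied under $D_2$ (equivalently, $\Cot$ dualizes to $T\db{-}$ as noted in the Preliminaries of Section \ref{Section PC algebras and adjunctions}); the third is Adjunction \ref{Theorem Adjunction Level 1} (Remark \ref{RemPhi}); the fourth is the equivalence $E_1$; the fifth is the duality $D_1$. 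Finally, the second half of Proposition \ref{prop nat isos} identifies the functor $B \mapsto D_1 E_1^{-1}\wCGQ(B^{*})$ — which is the composite $\cat{PAlg}_{\sim}\to\cat{PCog}_{\sim}\to k\dash\cat{Quiv}\to\cat{ParCog}\to\cat{ParAlg}$ — with $G\db{-}$. So the display becomes a natural isomorphism $\Hom_{\cat{PAlg}_{\sim}}(T\db{(A,U)}, B)\iso \Hom_{\cat{ParAlg}}((A,U), G\db{B})$, which is exactly the assertion that $T\db{-}\dashv G\db{-}$.

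The only thing requiring care — and hence the main obstacle — is \emph{naturality} of the composite bijection in both variables $(A,U)$ and $B$. Each individual isomorphism above is natural (the duality and equivalence functors are functorial on Hom-sets, the adjunction isomorphism of Remark \ref{RemPhi} is natural, and the natural isomorphisms of Proposition \ref{prop nat isos} are, by name, natural), so the composite is natural; but one should state which variable each factor is natural in and check the directions line up. I would dispatch this with one sentence noting that naturality is preserved under all the operations involved (composition of natural transformations, whiskering by functors, and the contravariant Hom-functors), so the composite is a natural isomorphism of bifunctors $(\cat{ParAlg})^{\tn{op}}\times\cat{PAlg}_{\sim}\to\cat{Set}$. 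No genuine computation is needed beyond what is already packaged in Propositions \ref{sim_dual}, \ref{prop duality with tilde}, and \ref{prop nat isos} and Theorem \ref{Theorem Adjunction Level 1}; the proof is, as in Theorem \ref{theorem pc left adjunction}, ``completely formal.''
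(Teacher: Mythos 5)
Your proposal is correct and takes exactly the route the paper intends: the paper's own proof is the one-line ``Immediate from Proposition \ref{prop nat isos} and the Adjunction \ref{Theorem Adjunction Level 1},'' and your chain of Hom-set isomorphisms is just that argument written out in full, in the same style as the proof of Theorem \ref{theorem pc left adjunction}. The only difference is the level of detail, and your bookkeeping (two variance reversals cancelling, identification of the transported composites with $T\db{-}$ and $G\db{-}$ via Proposition \ref{prop nat isos}) is accurate.
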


\begin{proof}
Immediate from Proposition \ref{prop nat isos} and the Adjunction \ref{Theorem Adjunction Level 1}.
\end{proof}

The main adjunction from \cite{IM} can be interpreted as a special case of Theorem \ref{theorem PC covariant adjunction}: The subcategory $\mathcal{F}$ of $\cat{ParAlg}$ whose objects are those pairs $(A,U)$ with both $A,U$ finite dimensional and whose morphisms are those $(\varphi_0,\varphi_1)$ with $\varphi_0$ surjective, is equivalent to the category of finite pointed quivers given in \cite{IM}.  On the algebra side we restrict $\cat{PAlg}_{\sim}$ to the category $\mathcal{A}$ whose objects are those algebras $A$ in $\cat{PAlg}_{\sim}$ with $A/J^2(A)$ finite dimensional, and whose morphisms are (congruence classes of) those algebra homomorphisms $A\to B$ such that the induced map $A/J(A)\to B/J(B)$ is surjective.  The functors above restrict to adjoint functors
\[\begin{tikzcd}
\mathcal{F} \ar[rr,bend left=13,"{T\db{-}}"] && \mathcal{A} \ar[ll,bend left=13,"{G\db{-}}"] 
\end{tikzcd}\]
and this adjunction is \cite[Theorem 5.2]{IM}.

\section{Uniqueness of presentations} \label{Sec.uniqueness of presentations}

We use formal properties of the functors discussed above and the definition of $\sim$ to describe precisely to what extent the presentation of a (co)algebra in terms of a path (co)algebra is unique.  
Say that an injective coalgebra homomorphism $\rho:H\to \CK[VQ]$ in $\cat{PCog}$ is \emph{admissible} if its image is an admissible subcoalgebra of $\CK[VQ]$ (that is, if $\rho(H)$ contains $\CK[VQ]_1$).  Recall from Remark \ref{RemPhi} that given a pointed coalgebra $C$ and a $k$-quiver $VQ$, we denote by $\Psi_{C,VQ}$ the hom-set isomorphism induced by the Adjunction \ref{Theorem Adjunction Level 1}.

\begin{lemma}\label{Lemma Psi of inj is iso}
Let $VQ$ be a $k$-quiver, $H$ a pointed coalgebra  and $\rho:H\to \CK[VQ]$ an admissible coalgebra homomorphism. 
Then the corresponding morphism $\Psi_{H,VQ}([\rho])$ is an isomorphism. 
\end{lemma}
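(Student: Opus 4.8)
The plan is to use the explicit description of the adjunction isomorphism from Remark \ref{RemPhi}, namely $\Psi_{H,VQ}([\rho]) = \varepsilon_{VQ}\circ \wCGQ([\rho])$. Since every component of the counit $\varepsilon$ of Adjunction \ref{Theorem Adjunction Level 1} is an isomorphism, it suffices to show that $\wCGQ([\rho]) = \CGQ(\rho)\colon \CGQ(H)\to \CGQ(\CK[VQ])$ is an isomorphism of $k$-quivers. Concretely, I would check that $\rho$ restricts to a bijection $\Gr(H)\to \Gr(\CK[VQ])$ on vertices, and that for each $g,h\in \Gr(H)$ the induced linear map $\qPri{g,h}(H)\to \qPri{\rho(g),\rho(h)}(\CK[VQ])$ (i.e.\ the restriction of $\bar\rho$) is an isomorphism.

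For the vertices, Proposition \ref{Coradical filtration and homomorphisms} gives $\rho(H_0)\subseteq (\CK[VQ])_0$, while admissibility gives $(\CK[VQ])_0\subseteq (\CK[VQ])_1\subseteq \rho(H)$; combining the latter with the injectivity of the induced map $H/H_0\to \CK[VQ]/(\CK[VQ])_0$ (Lemma \ref{Lemma injective mod Cn}) shows that every element of $(\CK[VQ])_0$ lifts to $H_0$, so $\rho$ restricts to a coalgebra isomorphism $H_0\xrightarrow{\sim}(\CK[VQ])_0$ and hence to a bijection $\Gr(H)\xrightarrow{\sim}\Gr(\CK[VQ])=VQ_0$.

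For the arrow spaces, fix $g,h\in\Gr(H)$ and set $e=\rho(g)$, $f=\rho(h)$. The relevant map is a restriction of $\bar\rho\colon H/H_0\to \CK[VQ]/(\CK[VQ])_0$, which is injective by Lemma \ref{Lemma injective mod Cn}. For surjectivity, given $p\in \Pri{e,f}(\CK[VQ])$ I would note that $\varDelta(p)\in \CK[VQ]\otimes (\CK[VQ])_0 + (\CK[VQ])_0\otimes \CK[VQ]$, so $p\in (\CK[VQ])_1\subseteq\rho(H)$ and $p=\rho(q)$ for some $q\in H$; then applying the injective linear map $\rho\otimes\rho$ to $\varDelta_H(q)-q\otimes g-h\otimes q$ (whose image under $\rho\otimes\rho$ is $\varDelta(p)-p\otimes e-f\otimes p=0$) forces $q\in\Pri{g,h}(H)$, and the class of $q$ in $\qPri{g,h}(H)$ maps onto that of $p$.

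The main obstacle is the surjectivity on arrow spaces: one must lift a skew-primitive of $\CK[VQ]$ to a skew-primitive of $H$. This is precisely where the admissibility hypothesis enters (it forces $(\CK[VQ])_1$, hence all skew-primitives, into the image of $\rho$), and it is then made to work by pulling the comultiplication identity back along the injective map $\rho\otimes\rho$. All remaining verifications are routine.
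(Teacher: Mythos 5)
Your proposal is correct and takes essentially the same route as the paper: both use $\Psi_{H,VQ}([\rho])=\varepsilon_{VQ}\circ\wCGQ([\rho])$ and the fact that $\varepsilon_{VQ}$ is an isomorphism to reduce to showing $\wCGQ(\rho)$ is an isomorphism, with admissibility and Lemma \ref{Lemma injective mod Cn} supplying the surjectivity and injectivity. The paper packages the reduction more compactly, showing that $\rho$ restricts to a coalgebra isomorphism $H_1\to \CK[VQ]_1$ and then invoking the construction of $\wCGQ(-)$, whereas you verify the bijections on vertices and on each $\qPri{g,h}$ by hand (lifting skew-primitives via the injectivity of $\rho\otimes\rho$); the content is the same.
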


\begin{proof} 
The map $\rho$ being admissible implies that the restriction $\rho:H_1\to \CK[VQ]_1$ is an isomorphism. Indeed, it is surjective because given $y\in \CK[VQ]_1$ there is $x \in H$ such that $\rho(x)=y$. But the induced map $H/{H_1}\to \CK[VQ]/{\CK[VQ]_1}$ is injective by Lemma \ref{Lemma injective mod Cn} and so $x\in H_1$. Now the result follows by construction (see Remark \ref{RemPhi}), since $\varepsilon_{VQ}$ is an isomorphism.
\end{proof}

Let $\gamma, \delta : C \to k[\tn{GQ}(C)]$ be two presentations of the coalgebra $C\in \cat{PCog}$ as an admissable subcoalgebra of its path coalgebra.

\begin{prop}\label{prop move presentations mod tilde}
There is an automorphism $\rho$ of $k[\tn{GQ(C)}]$ for which the diagram
\[
\begin{tikzcd}  & C\ar[dl,swap,"\gamma"]\ar[dr,"{\delta}"] & \\
k[\tn{GQ(C)}]\ar[rr,swap,"\rho"] && k[\tn{GQ(C)}]
\end{tikzcd}
\]
commutes in $\cat{PCog}_{\sim}$.
\end{prop}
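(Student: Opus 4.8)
The plan is to transport the statement along the hom-set bijection of Remark \ref{RemPhi}. Write $VQ := \CGQ(C) = \wCGQ(C)$ and $\Psi := \Psi_{C,VQ}$. Because $\gamma$ and $\delta$ are admissible coalgebra homomorphisms into $\CK[VQ]$, Lemma \ref{Lemma Psi of inj is iso} tells us that $\psi_\gamma := \Psi([\gamma])$ and $\psi_\delta := \Psi([\delta])$ are \emph{automorphisms} of the $k$-quiver $VQ$ in $k\dash\cat{Quiv}$. I would then set $\varphi := \psi_\delta\circ\psi_\gamma^{-1}\in\tn{Aut}_{k\dash\cat{Quiv}}(VQ)$ and define the candidate $\rho := \CK[\varphi]$.

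The first routine point to record is that $\rho$ is an automorphism of $\CK[\CGQ(C)]$ in $\cat{PCog}$, and not merely an isomorphism in the quotient category: this is immediate since $\CK[-]:k\dash\cat{Quiv}\to\cat{PCog}$ is a functor and functors preserve isomorphisms (alternatively one may invoke the proposition asserting that $\Pi$ reflects isomorphisms, applied to $\wCK[\varphi]$). It then remains to check that the displayed triangle commutes in $\cat{PCog}_{\sim}$, i.e.\ that $\rho\circ\gamma\sim\delta$, equivalently $\wCK[\varphi]\circ[\gamma] = [\delta]$ in $\cat{PCog}_{\sim}$. Using the explicit formula $\Psi^{-1}(\varphi)=\wCK[\varphi]\,\eta_C$ from Remark \ref{RemPhi}, we have $[\gamma] = \Psi^{-1}(\psi_\gamma) = \wCK[\psi_\gamma]\,\eta_C$, so by functoriality of $\wCK[-]$
\[
\wCK[\varphi]\circ[\gamma] = \wCK[\varphi]\circ\wCK[\psi_\gamma]\circ\eta_C = \wCK[\varphi\circ\psi_\gamma]\circ\eta_C = \wCK[\psi_\delta]\circ\eta_C = \Psi^{-1}(\psi_\delta) = [\delta],
\]
since $\varphi\circ\psi_\gamma = \psi_\delta$ by construction. (Equivalently one may argue purely by naturality of $\Psi$ in the $k$-quiver variable: $\Psi(\wCK[\varphi]\circ[\gamma]) = \varphi\circ\Psi([\gamma]) = \varphi\circ\psi_\gamma = \psi_\delta = \Psi([\delta])$, whence $\wCK[\varphi]\circ[\gamma]=[\delta]$ by injectivity of $\Psi$.)

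Since everything here is formal, I do not expect a genuine obstacle. The only points demanding a little care are (a) making sure that the morphism produced is an honest coalgebra automorphism of $\CK[\CGQ(C)]$ rather than merely an isomorphism in $\cat{PCog}_{\sim}$ — handled by functoriality of $\CK[-]$ as above — and (b) keeping the order of composition straight when invoking naturality of the adjunction; beyond that the argument is a direct application of Lemma \ref{Lemma Psi of inj is iso} together with the explicit description of the unit $\eta$ and counit $\varepsilon$ of Adjunction \ref{Theorem Adjunction Level 1}.
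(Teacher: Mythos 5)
Your proof is correct and follows essentially the same route as the paper's: both apply Lemma \ref{Lemma Psi of inj is iso} to see that $\Psi([\gamma])$ and $\Psi([\delta])$ are isomorphisms, set $\rho = k[\Psi([\delta])\Psi([\gamma])^{-1}]$, and verify $\rho\gamma\sim\delta$ using the identity $k[\Psi([\gamma])]\eta_C = [\gamma]$. Your extra remark that $\rho$ is an honest automorphism in $\cat{PCog}$ (not merely in the quotient category) because $\CK[-]$ is a functor is a nice clarification but does not change the argument.
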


\begin{proof}
Denote by 
$$\Psi = \Psi_{C,\tn{GQ}(C)}:\tn{Hom}_{\cat{PCog}_{\sim}}(C, k[\tn{GQ}(C)]) \to \tn{Hom}_{k\dash\cat{Quiv}}(\tn{GQ}(C), \tn{GQ}(C))$$
the adjunction isomorphism.  General properties of adjoint functors tell us that
$$k[\Psi(\gamma)]\eta_C = \gamma \,\,\hbox{ and }\,\, k[\Psi(\delta)]\eta_C = \delta.$$
By Lemma \ref{Lemma Psi of inj is iso}, $\Psi(\gamma), \Psi(\delta)$ are isomorphisms, so we obtain the automorphism
$$\Psi(\delta)\Psi(\gamma)^{-1}$$
of $\tn{GQ}(C)$.  Applying $k[-]$ to this map we obtain the automorphism $\rho = k[\Psi(\delta)\Psi(\gamma)^{-1}]$ of $k[\tn{GQ}(C)]$ and we claim that $\rho\gamma = \delta$:
\begin{align*}
    \rho\gamma & = k[\Psi(\delta)]k[\Psi(\gamma)]^{-1}\gamma \\
    & = k[\Psi(\delta)]\eta_C \\
    & = \delta,
\end{align*}
as required.
\end{proof}

\begin{prop}\label{prop deal with tilde presentations}
Given two presentations $\gamma, \delta : C \to k[\tn{GQ(C)}]$ with $\gamma \sim \delta$, there exists an automorphism $\psi$ of $k[\tn{GQ(C)}]$ with $\psi \sim \tn{id}$ and such that $\psi\gamma = \delta$.
\end{prop}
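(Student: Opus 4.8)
The plan is to construct $\psi$ explicitly from the universal property of the cotensor coalgebra. Write $D := \CK[\CGQ(C)] = \Cot_{C_0}(C_1/C_0)$, with canonical coalgebra projection $\pi_0 : D \to C_0$ and canonical $C_0$-bicomodule projection $\pi_1 : D \to C_1/C_0$ (vanishing on $D_0$); recall that $D$ is coradically graded, so $D_0 = C_0$ and $D_1 = D_0 \oplus D_{(1)}$, where the degree-one part $D_{(1)}$ is the copy of $C_1/C_0$ onto which $\pi_1$ projects.

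First I would record, exactly as in the proof of Lemma \ref{Lemma Psi of inj is iso}, that since $\gamma$ is admissible it restricts to isomorphisms $C_0 \xrightarrow{\sim} D_0$ and $C_1 \xrightarrow{\sim} D_1$, and $D_1 \subseteq \gamma(C) =: H$; hence $\gamma^{-1} : H \to C$ is a coalgebra isomorphism with $\gamma^{-1}(D_0) = C_0$, and $g := \delta\gamma^{-1} : H \to D$ is a coalgebra homomorphism. Using $\gamma \sim \delta$ I would check the two ``smallness'' statements $g|_{D_0} = \id_{D_0}$ and $(g - \iota_H)(D_1) \subseteq D_0$, where $\iota_H : H \incl D$; composing with $\pi_0$ and $\pi_1$ these say $\pi_0 g|_{D_0} = \id$, $\pi_1 g|_{D_0} = 0$, and $\pi_1 g|_{D_{(1)}} = \id_{D_{(1)}}$.

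The core of the argument is to extend the coalgebra homomorphism $g : H \to D$ to a coalgebra endomorphism $\psi$ of $D$. By Proposition \ref{cUniversalProperty} this reduces to two separate extension problems: (a) extend the coalgebra map $\psi_0 := \pi_0 g : H \to C_0$ to a coalgebra map $D \to C_0$; and (b), having done (a) and thereby given $D$ a $C_0$-bicomodule structure via the extended $\psi_0$, extend the $C_0$-bicomodule map $\psi_1 := \pi_1 g : H \to C_1/C_0$ (the bicomodule structure on $H$ being the one via $\pi_0 g$, which is exactly the one making $\pi_1 g$ a bicomodule map) to a $C_0$-bicomodule map $D \to C_1/C_0$. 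I expect (a) to be the main obstacle. Since $D_0 \subseteq H$ we have $H^{\perp} \subseteq D_0^{\perp} = J(D^*)$; coalgebra maps $D \to C_0 = k\Gr(C)$ correspond to complete systems of orthogonal idempotents of $D^*$ indexed by $\Gr(C)$, and restriction along $H \incl D$ is reduction modulo $H^{\perp}$, so the desired extension exists because idempotents --- and complete orthogonal systems of them --- lift modulo the Jacobson radical of a pseudocompact algebra. Problem (b) is routine: $C_1/C_0$ is an injective $C_0$-bicomodule because $C_0$ is cosemisimple, and $H \incl D$ is an inclusion of $C_0$-bicomodules, so the extension exists. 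Both extensions automatically still restrict to $\id$ on $D_0$ (resp.\ to $0$ on $D_0$ and to $\id$ on $D_{(1)}$), since $D_0$ and $D_{(1)}$ lie inside $H$.

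Finally I would let $\psi : D \to D$ be the coalgebra homomorphism produced from $(\psi_0, \psi_1)$ by Proposition \ref{cUniversalProperty}, so $\pi_0\psi = \psi_0$ and $\pi_1\psi = \psi_1$. That $\psi \sim \id_D$ is then formal: $\psi(D_0) \subseteq D_0$ and $\pi_0$ is the identity there, so $\psi|_{D_0} = \psi_0|_{D_0} = \id$; and for $v \in D_{(1)}$ one has $\psi(v) \in D_1 = D_0 \oplus D_{(1)}$, hence $\psi(v) = \pi_0\psi(v) + \pi_1\psi(v) = \psi_0(v) + v$, giving $(\psi - \id)(D_1) \subseteq D_0$. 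By the proposition that $\Pi$ reflects isomorphisms, $\psi$ is then automatically an automorphism. That $\psi\gamma = \delta$ is the uniqueness clause of Proposition \ref{cUniversalProperty}: $\psi\gamma$ and $\delta$ are coalgebra maps $C \to D$ with $\pi_0(\psi\gamma) = \psi_0\gamma = \pi_0 g\gamma = \pi_0\delta$ and $\pi_1(\psi\gamma) = \psi_1\gamma = \pi_1\delta$ (the latter vanishing on $C_0$), so they coincide.
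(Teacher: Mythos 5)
Your proof is correct, but it is genuinely different from the one in the paper. The paper dualizes the statement to $\cat{PAlg}$ (two congruent surjections $k\db{VQ}\to A$) and then invokes the argument of \cite[Proposition 6.1]{IM} together with Eckstein's pseudocompact Malcev Uniqueness Theorem, i.e.\ it produces the automorphism by conjugating one Wedderburn-type splitting into another by a unit of the form $1+j$. You instead stay on the coalgebra side and build $\psi$ directly from the universal property of $\Cot_{C_0}(C_1/C_0)$: you transport the problem to extending the partially defined map $g=\delta\gamma^{-1}:H\to \CK[\CGQ(C)]$ from the admissible subcoalgebra $H=\gamma(C)$ to the whole path coalgebra, which splits into extending $\pi_0 g$ (a coalgebra map to $C_0$) and $\pi_1 g$ (a bicomodule map to $C_1/C_0$). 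Your verification that $g|_{D_0}=\id$ and $(g-\iota_H)(D_1)\subseteq D_0$ from $\gamma\sim\delta$, the deduction that $\psi\sim\id$ from the grading of the cotensor coalgebra, the use of the reflection-of-isomorphisms proposition to conclude $\psi$ is an automorphism, and the uniqueness-clause argument for $\psi\gamma=\delta$ are all sound. The one place where you import something not proved in the paper is step (a): the extension of the coalgebra map $H\to C_0$ across $H\incl D$, which you correctly recognize as dual to lifting a complete orthogonal system of idempotents of $D^*/H^\perp$ along the closed ideal $H^\perp\subseteq D_0^\perp=J(D^*)$. That lifting statement (for possibly infinite systems, modulo a closed ideal inside the radical of a pseudocompact algebra) is true and standard, but it is exactly the Wedderburn--Malcev-flavoured input that plays the role Eckstein's theorem plays in the paper's proof, so it deserves an explicit reference rather than a one-line appeal. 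The trade-off: the paper's proof is shorter but outsources everything to \cite{IM} and Eckstein; yours is longer but self-contained relative to the tools already set up in Sections 2--4 (cotensor universal property, coradical grading, injectivity of comodules over the cosemisimple $C_0$), and it never leaves $\cat{PCog}$ except for the idempotent-lifting step.
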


\begin{proof}
It is easier to prove the dual version of the result, which states that given two presentations of $A\in \cat{PAlg}$ 
$$\gamma'\sim \delta' : k\db{VQ}\to A,$$
there is a continuous automorphism $\psi'\sim \tn{id}$ of $k\db{VQ}$ such that $\gamma'\psi' = \delta'$.  The proof of \cite[Proposition 6.1]{IM} carries through for pseudocompact algebras, using a version for pseudocompact algebras of the Malcev Uniqueness Theorem due to Eckstein \cite[Theorem 17]{Eckstein}.
\end{proof}

Putting these together we obtain a description of the uniqueness of a presentation of a coalgebra as an admissible subcoalgebra of its path coalgebra.

\begin{corol}
Let $C,D$ be admissible subcoalgebras of the path coalgebra $k[VQ]$.  Then $C$ is isomorphic to $D$ if, and only if, there is a coalgebra automorphism of $k[VQ]$ mapping $C$ isomorphically onto $D$.
\end{corol}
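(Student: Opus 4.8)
The plan is to read the statement off from Propositions~\ref{prop move presentations mod tilde} and~\ref{prop deal with tilde presentations}, which were set up for exactly this purpose. The ``if'' direction requires nothing: an automorphism of $k[VQ]$ restricting to an isomorphism $C\to D$ is in particular an isomorphism $C\iso D$. For ``only if'', suppose $f:C\to D$ is a coalgebra isomorphism. Write $\gamma:C\incl k[VQ]$ for the inclusion, let $\iota_D:D\incl k[VQ]$ be the inclusion of $D$, and put $\delta\coloneqq \iota_D\circ f:C\to k[VQ]$. Then $\gamma$ and $\delta$ are two admissible coalgebra homomorphisms, their images being the admissible subcoalgebras $C$ and $D$; what I want is an honest automorphism $\sigma$ of $k[VQ]$ with $\sigma\gamma=\delta$, since then $\sigma(c)=f(c)$ for every $c\in C$, so $\sigma$ restricts to $f$ and hence carries $C$ isomorphically onto $D$.

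The key observation is that Propositions~\ref{prop move presentations mod tilde} and~\ref{prop deal with tilde presentations} hold verbatim with $\tn{GQ}(C)$ replaced by the given $VQ$: the proof of the former uses only the hom-set bijection $\Psi_{C,VQ}$ of Remark~\ref{RemPhi} together with Lemma~\ref{Lemma Psi of inj is iso}, both valid for any $k$-quiver, and the proof of the latter is carried out through its dual statement, which already concerns an arbitrary complete path algebra $k\db{VQ}$. (Alternatively, one may reduce to the case $VQ=\tn{GQ}(C)$: an admissible subcoalgebra of $k[VQ]$ satisfies $C_0=k[VQ]_0$ and $C_1=k[VQ]_1$, so $\tn{GQ}(C)=\tn{GQ}(k[VQ])$, which is isomorphic to $VQ$ via the counit $\varepsilon_{VQ}$ of Adjunction~\ref{Theorem Adjunction Level 1}; one then transports everything along a coalgebra isomorphism $k[\tn{GQ}(C)]\iso k[VQ]$, using that such an isomorphism preserves coradical filtrations and hence admissibility.) Granting this, Proposition~\ref{prop move presentations mod tilde} provides an automorphism $\rho$ of $k[VQ]$ with $\rho\gamma\sim\delta$. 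Since a coalgebra automorphism fixes $k[VQ]_1$, the composite $\rho\gamma$ is again an admissible presentation of $C$, so applying Proposition~\ref{prop deal with tilde presentations} to the pair $\rho\gamma\sim\delta$ yields an automorphism $\psi\sim\tn{id}$ of $k[VQ]$ with $\psi(\rho\gamma)=\delta$. Then $\sigma\coloneqq\psi\rho$ is an automorphism of $k[VQ]$ with $\sigma\gamma=\delta$, finishing the argument as explained above.

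I do not expect a genuinely hard step here; the proof is essentially bookkeeping with the adjunction and the two preceding propositions. The one point that needs a little care is precisely the mismatch between those propositions, which are phrased for the path coalgebra $k[\tn{GQ}(C)]$, and the ambient path coalgebra $k[VQ]$ of the Corollary: one must either check that the propositions generalize as indicated, or carry out the reduction $VQ\iso\tn{GQ}(C)$ while verifying that admissibility (and the relation $\sim$) is respected by the transport.
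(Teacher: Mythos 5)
Your proof is correct and follows exactly the route the paper intends: the paper's own proof consists of the single instruction "apply Propositions \ref{prop move presentations mod tilde} and \ref{prop deal with tilde presentations}", and you carry out precisely that composition ($\rho$ from the first proposition, $\psi$ from the second, $\sigma=\psi\rho$). The one place you go beyond the paper is in explicitly handling the mismatch between the ambient $k[VQ]$ of the corollary and the $k[\tn{GQ}(C)]$ of the propositions (via $\varepsilon_{VQ}$ being an isomorphism, or by noting the proofs generalize verbatim), a point the paper silently elides; your treatment of it is correct.
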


\begin{proof}
If the automorphism of $k[VQ]$ exists, then $D$ is clearly isomorphic to $C$.  If $C$ is isomorphic to $D$ then apply Propositions \ref{prop move presentations mod tilde} and \ref{prop deal with tilde presentations} to obtain the required automorphism of $k[VQ]$.
\end{proof}

Recall that a \emph{relation ideal} of $k\db{VQ}$ is a closed ideal contained inside $J^2(k\db{VQ})$ (this definition corresponds by duality to ``admissible subcoalgebra'', but for algebras the term ``admissible'' is usually reserved for ideals $I$ of the form $J^n\subseteq I\subseteq J^2$ for some $n$).  Follows the dual version for pseudocompact algebras of the above corollary:

\begin{prop}\label{prop uniqueness of PC presentations}
Let $VQ$ be a $k$-quiver and $I,L$ relation ideals of $k\db{VQ}$.  Then the pseudocompact algebras $k\db{VQ}/I$ and $k\db{VQ}/L$ are isomorphic if, and only if, there is a continuous algebra automorphism of $k\db{VQ}$ sending $I$ isomorphically onto $L$.
\end{prop}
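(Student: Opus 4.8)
The plan is to derive this Proposition by dualizing the Corollary above through Simson's duality $(-)^*$ between $\cat{PCog}$ and $\cat{PAlg}$ (Section \ref{Section PC algebras and adjunctions}).  Write $\CK[VQ]$ for the path coalgebra, so $\CK[VQ]^*\iso k\db{VQ}$.  First I would match up the objects on the two sides.  A closed ideal $I$ of $k\db{VQ}$ gives a quotient pseudocompact algebra $k\db{VQ}/I$, and dualizing the quotient map $k\db{VQ}\onto k\db{VQ}/I$ yields an injection of pointed coalgebras $C_I\coloneqq(k\db{VQ}/I)^*\incl\CK[VQ]$.  Recalling that under $(-)^*$ the ideal $J^{n+1}(C^*)$ is the annihilator of $C_n$ (so that $J(C^*)$ annihilates $C_0$, etc.\ --- a standard consequence of the description of the higher radicals in Section \ref{Section PC algebras and adjunctions}), the hypothesis that $I$ is a relation ideal, $I\subseteq J^2(k\db{VQ})$, translates exactly into $C_I\supseteq\CK[VQ]_1$, i.e.\ into $C_I$ being an admissible subcoalgebra of $\CK[VQ]$.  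Set $C\coloneqq C_I$ and $D\coloneqq C_L$.

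Next, since $(-)^*$ is a duality and $A^{**}\iso A$ for pseudocompact $A$, one has $k\db{VQ}/I\iso k\db{VQ}/L$ in $\cat{PAlg}$ if and only if $C\iso D$ in $\cat{PCog}$; and by the Corollary above the latter holds precisely when there is a coalgebra automorphism $\phi$ of $\CK[VQ]$ with $\phi(C)=D$.  I would then dualize such a $\phi$: it gives a continuous algebra automorphism $\phi^*$ of $k\db{VQ}$, and applying $(-)^*$ to the commuting square $\phi\,\iota_C=\iota_D\,\psi$ (inclusions $\iota_C,\iota_D$ and $\psi\colon C\to D$ the corestriction of $\phi$) reverses the arrows, giving $q_I\,\phi^*=\psi^*\,q_L$ with $q_I,q_L$ the quotient maps and $\psi^*$ an isomorphism.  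Comparing kernels then yields $\phi^*(L)=I$, so that $(\phi^*)^{-1}$ is a continuous algebra automorphism of $k\db{VQ}$ carrying $I$ isomorphically onto $L$.  The converse is immediate: any continuous automorphism of $k\db{VQ}$ with this property descends to an isomorphism $k\db{VQ}/I\to k\db{VQ}/L$.

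I expect no real obstacle, since the argument is purely formal.  The two points that need care are the claim that the relation-ideal condition $I\subseteq J^2$ is exactly dual to admissibility of $C_I$ (where one uses the description of the higher radicals $J^n$ under $(-)^*$), and keeping the contravariance straight when dualizing the square defining $\phi$.  No ingredient beyond the Corollary above and Simson's duality enters.
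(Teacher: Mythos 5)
Your proposal is correct and is precisely the argument the paper intends: the Proposition is stated as the dual of the preceding Corollary, to be obtained formally via Simson's duality $(-)^*$ exactly as you describe (relation ideals $I\subseteq J^2$ corresponding to admissible subcoalgebras $I^\perp\supseteq k[VQ]_1$, and the automorphism transported contravariantly). The paper leaves this dualization implicit; your write-up just fills in the same formal steps.
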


Say that a pseudocompact algebra $A$ is \emph{graded} if it can be expressed as a product of closed subspaces $A = \prod_{i\in \N}A_i$ in such a way that $a_ia_j\in A_{i+j}$ whenever $a_i\in A_i, a_j\in A_j$.  In this case, we say that $A_n$ is the \emph{degree $n$ homogeneous part} of $A$.  Observe that the pseudocompact path algebra $k\db{VQ}$ is graded, with degree $n$ homogeneous part generated as a pseudocompact vector space by the paths of length $n$.  Say that a relation ideal $I$ of $k\db{VQ}$ is \emph{homogeneous} if it is generated as a closed ideal by a set of homogeneous elements.  It has \emph{degree $n$} if it is generated by homogeneous elements of degree exactly $n$.  We present an application of Proposition \ref{prop uniqueness of PC presentations} that appears to be unknown even for finite dimensional algebras and when $n=2$. 

\begin{theorem}\label{theorem homogeneous presentations}
Let $VQ$ be a $k$-quiver and let $I,L$ be homogeneous ideals of $k\db{VQ}$ with $k\db{VQ}/I\!\iso k\db{VQ}/L$.  If $I$ has degree $n$, then so does $L$.
\end{theorem}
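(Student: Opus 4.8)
The plan is to combine the rigidity statement Proposition~\ref{prop uniqueness of PC presentations} with a ``leading term'' analysis of the automorphism it provides. Write $A = k\db{VQ} = \prod_{m\geqslant 0}A_m$ for the grading of the complete path algebra by path length, so that (as recalled before the theorem, using $A_iA_j = A_{i+j}$) the Jacobson radical is $J := J(A) = \prod_{i\geqslant 1}A_i$ and, more generally, $J^m = \prod_{i\geqslant m}A_i$; in particular $J^m = A_m\oplus J^{m+1}$ for every $m$, and every homogeneous closed ideal $K$ of $A$ decomposes as $K = \prod_m K_m$ with $K_m := K\cap A_m$. Since $k\db{VQ}/I\iso k\db{VQ}/L$, Proposition~\ref{prop uniqueness of PC presentations} furnishes a continuous algebra automorphism $\psi$ of $A$ with $\psi(I) = L$; note that $\psi$ need not respect the grading.

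Being a continuous automorphism, $\psi$ preserves $J$ and hence $J^m$ for all $m$. Composing $\psi|_{J^m}$ with the projection $J^m\onto J^m/J^{m+1} = A_m$ (legitimate since $J^m = A_m\oplus J^{m+1}$) we obtain, for each $m$, a continuous linear map $\overline{\psi}_m : A_m\to A_m$ sending $x$ to the degree-$m$ component of $\psi(x)$. Two elementary facts drive the proof. First, each $\overline{\psi}_m$ is a \emph{linear automorphism} of $A_m$: if $\overline{\psi}_m(x) = 0$ then $\psi(x)\in J^{m+1}$, so $x\in J^{m+1}$ and hence $x = 0$ as $x\in A_m$; surjectivity follows by running the same argument with $\psi^{-1}$ (which gives $\overline{\psi}_m^{-1} = \overline{(\psi^{-1})}_m$, so $\overline{\psi}_m$ is in fact a homeomorphism). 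Second, for $x\in A_a$ and $y\in A_b$ one has $\overline{\psi}_{a+b}(xy) = \overline{\psi}_a(x)\,\overline{\psi}_b(y)$, since $\psi(x) = \overline{\psi}_a(x) + (\text{higher-degree terms})$ and likewise for $y$, so the degree-$(a+b)$ component of $\psi(xy) = \psi(x)\psi(y)$ is precisely $\overline{\psi}_a(x)\,\overline{\psi}_b(y)$. Thus $(\overline{\psi}_m)_m$ is the \emph{associated graded} of $\psi$, an honest graded automorphism of $A$.

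Next I would compute the homogeneous components of $L$. Since $L = \psi(I)$ and $I$ is homogeneous, $L\cap J^m = \psi(I\cap J^m) = \psi\big(\prod_{i\geqslant m}I_i\big)$; comparing the degree-$m$ components in the two descriptions $L\cap J^m = \prod_{j\geqslant m}L_j$ and $L\cap J^m = \psi\big(\prod_{i\geqslant m}I_i\big)$ — and using that $\psi(I_i)\subseteq J^i$ has zero degree-$m$ component when $i>m$ — yields $L_m = \overline{\psi}_m(I_m)$ for every $m$. Now invoke the hypothesis that $I$ has degree $n$: equivalently, $I$ is the closed ideal generated by $I_n\subseteq A_n$, so $I_m = 0$ for $m<n$ and $I_m = \overline{\sum_{a+b=m-n}A_a\,I_n\,A_b}$ for $m\geqslant n$. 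Applying the continuous bijection $\overline{\psi}_m$ and using the two facts above (so that $\overline{\psi}_m(A_a\,I_n\,A_b) = \overline{\psi}_a(A_a)\,\overline{\psi}_n(I_n)\,\overline{\psi}_b(A_b) = A_a\,L_n\,A_b$) gives $L_m = 0$ for $m<n$ and $L_m = \overline{\sum_{a+b=m-n}A_a\,L_n\,A_b}$ for $m\geqslant n$. Hence $L = \prod_m L_m$ is the closed ideal generated by $L_n\subseteq A_n$, i.e.\ $L$ has degree $n$, as required.

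I expect the only genuinely delicate point to be the topological bookkeeping with the ``$\prod$'' topology: the identifications $J^m = \prod_{i\geqslant m}A_i$, the decomposition of a homogeneous closed ideal into its graded pieces, the description of the closed ideal generated by $I_n$ via $I_m = \overline{\sum_{a+b=m-n}A_a I_n A_b}$, and the commutation of the continuous maps $\overline{\psi}_m$ with these closures. Each of these is standard for the complete path algebra (cf.\ the discussion preceding the theorem), but must be handled with a little care; once they are in place, the argument above is purely formal and uses no input beyond Proposition~\ref{prop uniqueness of PC presentations}.
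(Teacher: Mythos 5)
Your proof is correct, but it takes a genuinely different route from the paper's. The paper also starts from Proposition~\ref{prop uniqueness of PC presentations}, but then argues by redundancy of generators: since $I$ is generated in degree $n$, one has $I\cap J^{n+1}\subseteq I\cdot J + J\cdot I\subseteq \tn{Rad}(I)$ (radical as a $k\db{VQ}$-bimodule); transporting this containment along the automorphism $\varphi$ gives $L\cap J^{n+1}\subseteq \tn{Rad}(L)$, so any homogeneous generator of $L$ of degree greater than $n$ lies in $\tn{Rad}(L)$ and is redundant by a topological Nakayama argument (generators of degree less than $n$ are zero since $L\subseteq J^n$). You instead build the associated graded automorphism $(\overline{\psi}_m)_m$ of $k\db{VQ}$ and compute $L_m=\overline{\psi}_m(I_m)$ for every $m$. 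Both arguments are sound; yours is longer and, as you note, requires care with the identifications $J^m=\prod_{i\geqslant m}A_i$, the decomposition of homogeneous closed ideals as $\prod_m K_m$, and the commutation of the homeomorphisms $\overline{\psi}_m$ with closures (all of which do hold for the complete path algebra), whereas the paper's argument needs only the single containment above. In exchange your approach proves strictly more: it exhibits a degree-preserving automorphism of $k\db{VQ}$ carrying $I$ to $L$ componentwise, hence in particular $\dim_k I_m=\dim_k L_m$ for all $m$ and an isomorphism of the associated graded quotients, not merely the equality of degrees of generation. One small presentational point: you should note explicitly that the case $L_n=0$ forces $I_n=0$ and hence $I=L=0$, so the conclusion that $L$ is generated in degree exactly $n$ is not vacuous.
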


\begin{proof}
Write $J^n = J^n(k\db{VQ})$.  We first claim that $I\cap J^{n+1}\subseteq \tn{Rad}(I)$, where $\tn{Rad}(I)$ denotes the radical of $I$ as a $k\db{VQ}$-bimodule.  We have that
$$I\cdot J + J\cdot I \subseteq \tn{Rad}(I).$$
The ideal $I\cap J^{n+1}$ is generated as a closed ideal by elements of the form $p\cdot \alpha\in I\cdot J$ or $\alpha\cdot p\in J\cdot I$, where $p$ is a generator of $I$ of degree $n$ and $\alpha$ is an arrow of $VQ$, hence the claim.  By Proposition \ref{prop uniqueness of PC presentations}, there is a continuous automorphism $\varphi$ of $k\db{VQ}$ such that $\varphi(I) = L$ and so
$$I\cap J^{n+1}\subseteq \tn{Rad}(I) \implies \varphi(I\cap J^{n+1})\subseteq \varphi(\tn{Rad}(I)) \implies L\cap J^{n+1}\subseteq \tn{Rad}(L).$$
Suppose that $L$ is generated by a set of homogeneous elements $X$.  If ever $x\in X\cap J^{n+1}$, then $x\in \tn{Rad}(L)$ and is thus redundant.  It follows that $L$ is generated by its intersection with the degree $n$ part of $k\db{VQ}$, as required.
\end{proof}

\begin{remark} \label{EduQ}
This question was brought to our attention by Eduardo Marcos, who asked: if a finite dimensional pointed algebra $A$ has a quadratic presentation $kQ/I$ (that is, if $I$ is generated by linear combinations of paths of length $2$) and if $kQ/L$ is another homogeneous presentation of $A$, then must this presentation also be quadratic?  The positive answer to this question is a special case of Theorem \ref{theorem homogeneous presentations}.
\end{remark}

\bibliography{mref}
\bibliographystyle{alpha}

\end{document}